\documentclass[11pt,reqno]{amsart}
\usepackage[T1]{fontenc}
\usepackage{lmodern,amsmath,amssymb,mathtools,microtype}
\usepackage[a4paper,margin=28mm]{geometry}
\usepackage[unicode,hidelinks]{hyperref}
\numberwithin{equation}{section}

\newtheorem{theorem}{Theorem}[section]
\newtheorem{proposition}[theorem]{Proposition}
\newtheorem{lemma}[theorem]{Lemma}
\newtheorem{corollary}[theorem]{Corollary}
\theoremstyle{remark}

\theoremstyle{plain}

\newcommand{\R}{\mathbb R}
\newcommand{\C}{\mathbb C}
\newcommand{\Q}{\mathbb Q}
\newcommand{\Z}{\mathbb Z}
\newcommand{\HH}{\mathbb H}
\newcommand{\DD}{\mathbb D}
\newcommand{\cA}{\mathcal A}
\newcommand{\cB}{\mathcal B}
\newcommand{\cK}{\mathcal K}
\newcommand{\cS}{\mathcal S}
\newcommand{\fl}{\mathfrak l}
\newcommand{\fk}{\mathfrak k}
\newcommand{\fp}{\mathfrak p}
\newcommand{\GL}{\operatorname{GL}}

\newcommand{\OO}{\operatorname{O}}
\newcommand{\SO}{\operatorname{SO}}
\newcommand{\UU}{\operatorname{U}}
\newcommand{\Sp}{\operatorname{Sp}}
\newcommand{\Mat}{\operatorname{Mat}}
\newcommand{\End}{\operatorname{End}}
\newcommand{\Hom}{\operatorname{Hom}}
\newcommand{\tr}{\operatorname{tr}}
\newcommand{\diag}{\operatorname{diag}}
\newcommand{\N}{\operatorname{N}}
\newcommand{\Pf}{\operatorname{Pf}}
\newcommand{\sgn}{\operatorname{sgn}}

\newcommand{\eps}{\varepsilon}
\newcommand{\absC}[1]{\lvert#1\rvert_{\C}}

\title[Uniqueness of twisted linear periods]
{Uniqueness of twisted linear periods\\over Archimedean division algebras}
\author[Y.F.~Li]{Yufeng Li}
\address{School of Mathematical Sciences, Peking University, No. 5 Yiheyuan Road, Beijing 100871, P.R. China}
\email{2401110016@stu.pku.edu.cn}

\author[J.Y.~Xiao]{Junyan Xiao}
\address{School of Mathematical Sciences, Peking University, No. 5 Yiheyuan Road, Beijing 100871, P.R. China}
\email{jyxiao25@stu.pku.edu.cn}

\date{October 14, 2026}
\subjclass[2020]{Primary 22E46; Secondary 22E45,46F05}
\keywords{Twisted linear periods, multiplicity one, Archimedean division algebras, Casselman--Wallach representations, equivariant tempered distributions}

\begin{document}
\raggedbottom
\begin{abstract}
Let $\DD$ be $\R$, $\C$, or $\HH$, and let
$H=\GL_p(\DD)\times\GL_q(\DD)$ be the block diagonal subgroup of
$G=\GL_{p+q}(\DD)$. We prove that
$\dim\Hom_H(\pi,\eta)\leq1$ for every irreducible
Casselman--Wallach representation $\pi$ of $G$ and every smooth character
$\eta$ of $H$.
\end{abstract}
\maketitle
\section{Introduction}\label{sec:introduction}

Let $\DD\in\{\R,\C,\HH\}$, fix positive integers $p,q$, and put $n=p+q$.
We consider the pair
\[
 G=\GL_n(\DD),\qquad H=\GL_p(\DD)\times\GL_q(\DD),
\]
where $H$ is embedded block diagonally. A Casselman--Wallach
representation is a smooth admissible Fr\'echet representation of
moderate growth whose Harish--Chandra module has finite length;
see \cite{BK}. All representations are on complex vector spaces.
For such a representation $\pi$ on $E$ and a smooth character
$\eta:H\to\C^\times$, we write
\[
 \Hom_H(\pi,\eta)
 =\{\ell:E\to\C\text{ continuous and linear}:
       \ell(\pi(h)v)=\eta(h)\ell(v)\}.
\]
These functionals are the $\eta$-twisted linear periods of $\pi$.

\begin{theorem}\label{thm:main}
For every irreducible Casselman--Wallach representation $\pi$ of $G$
and every smooth character $\eta$ of $H$,
\begin{equation}\label{eq:main}
 \dim\Hom_H(\pi,\eta)\leq1.
\end{equation}
\end{theorem}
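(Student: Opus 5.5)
We will use the Gelfand--Kazhdan criterion in the form adapted to Casselman--Wallach representations and characters, as developed by Aizenbud--Gourevitch and Sun--Zhu. Let $\sigma$ be an anti-involution of $G$ with $\sigma(H)=H$ and $\eta\circ\sigma=\eta$. We take $\sigma(g)=\bar g^{\,t}$ for $\DD=\HH$ (quaternionic conjugate transpose) and $\sigma(g)=g^t$ otherwise. It suffices to prove the following statement: every tempered generalized function $f$ on $G$ satisfying
\[
 f(h_1 g h_2)=\eta(h_1)\eta(h_2)f(g)\qquad(h_1,h_2\in H)
\]
also satisfies $f(\sigma(g))=f(g)$. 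The criterion then gives $\dim\Hom_H(\pi,\eta)\cdot\dim\Hom_H(\pi^\vee,\eta^{-1})\le1$. To pass from this to \eqref{eq:main}, we use the standard fact that $\pi^\vee\cong\pi\circ\theta$ with $\theta(g)=\sigma(g)^{-1}$. This fact holds for $\GL_n$ over $\R,\C$, and by the known result for $\GL_n(\HH)$ it holds there as well. Since $\theta$ preserves $H$, composing with $\theta$ identifies $\Hom_H(\pi^\vee,\eta^{-1})$ with $\Hom_H(\pi,\eta\circ\theta)$, and $\eta\circ\theta=\eta^{-1}$. Hence the two multiplicities are equal, and each is at most $1$.

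To prove the invariance of $f$, we work with $H\times H$ acting on $G$ and pass to the symmetric-space picture. Let $\epsilon=\diag(1_p,-1_q)$. The map $g\mapsto g\epsilon g^{-1}\epsilon$ identifies $G/H$ with a subset of the symmetric space $X=\{x:x\,\epsilon x\epsilon=1\}$, and the problem becomes an $\eta$-twisted statement on $X$ under conjugation by $H$. Following Jacquet--Rallis and Aizenbud--Gourevitch, we would apply Harish-Chandra descent. The $H$-semisimple orbits have centralizers that are products of groups $\GL_{k}(\DD)\times\GL_{k}(\DD)$, of groups $\GL_k(\DD')$ with $\DD'\in\{\R,\C,\HH\}$ (central extensions appearing through eigenvalues $\lambda\ne\pm1$), and of smaller copies of the original pair $(\GL_{p'+q'},\GL_{p'}\times\GL_{q'})$. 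Using the descent theorem, and checking that $\sigma$ fixes each semisimple orbit, the question reduces, by induction on $n$, to distributions supported on the nilpotent cone of the tangent space $\fp=\Hom(\DD^q,\DD^p)\oplus\Hom(\DD^p,\DD^q)$. We also need the descendants to be again of this type or "regular", meaning pairs for which the claim is already known, such as the group case $\GL_k\times\GL_k\supset\Delta\GL_k$ twisted by characters.

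The main obstacle is this nilpotent step. We must show that an $(H,\eta)$-equivariant tempered distribution on $\fp$, supported on the nilpotent cone, whose Fourier transform is also supported there, is $\sigma$-invariant. The difficulty is that $\eta$ is an arbitrary character, and for $\DD=\HH$ reduced norms replace determinants, so the standard homogeneity argument has to be redone. We would first try the Aizenbud--Gourevitch method. We stratify the nilpotent cone into finitely many $H$-orbits, classified by signed Young diagrams. For each orbit $O$ we use an Euler vector field, coming from an $\mathfrak{sl}_2$-triple adapted to the $(\fk,\fp)$-grading, to compare its eigenvalues on the transversal symmetric powers $\mathrm{Sym}^k(N_O)$ with the homogeneity degree forced by the condition that both the distribution and its Fourier transform are supported on the cone. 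The required inequality is that the trace of the $h$-action on $N_O^\vee$ exceeds $\dim\fp$ minus the appropriate value. If it holds, distributions anti-invariant under $\sigma$, or transforming by a nontrivial character, vanish. The twist by $\eta$ only contributes a unitary scalar on the normal spaces and does not change real parts of the eigenvalues, so it does not spoil the inequality. If this inequality fails for some orbits, the fallback is to treat them separately by a direct computation using the explicit matrix form of $\sigma$ on $\fp$, namely $(A,B)\mapsto(B^*,A^*)$. That form shows $\sigma$ preserves each nilpotent orbit, and we would then compare the $\eta$-twisted modular characters on the stabilizers.
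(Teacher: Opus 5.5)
Your reduction is the same as the paper's. Gelfand--Kazhdan in the form of \cite{SZGK} turns Theorem~\ref{thm:main} into Theorem~\ref{thm:distributions}, and $\pi^\vee\simeq\pi\circ\alpha$ together with $\eta\circ\alpha=\eta^{-1}$ makes the two multiplicities equal. There is a small slip: composing with $\theta$ identifies $\Hom_H(\pi^\vee,\eta^{-1})$ with $\Hom_H(\pi,\eta^{-1}\circ\theta)=\Hom_H(\pi,\eta)$, not with $\Hom_H(\pi,\eta\circ\theta)$.

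\textbf{The gap.} The distributional statement carries all the content, and it is not proved. Your nilpotent step is left as ``we would first try \ldots; if this inequality fails, the fallback is \ldots''. The one reason you give for the twist being harmless is false: a smooth character need not be unitary, since $|\det A|^u|\det D|^v$ with arbitrary $u,v\in\C$ is allowed. The non-unitary part does reach the nilpotent cone. Take $p=q$ and $\DD=\R$, in the paper's convention $(h_l,h_r)\cdot x=h_lxh_r^{-1}$.
\begin{itemize}
\item The element $w=\left(\begin{smallmatrix}0&I\\ I&0\end{smallmatrix}\right)$ maps to $-1$ in your symmetric space, so it is a semisimple point with stabilizer $\{(\diag(a,d),\diag(d,a))\}\cong H$.
\item The normal slice at $w$ is $\fp$ with the adjoint $H$-action, and $\chi_\eta$ restricts to $\nu(\det a)\nu(\det d)^{-1}$ with $\nu=|\cdot|^{u-v}$.
\item This descendant has the same $n$, so your induction does not remove it.
\item For the rank-one nilpotent $e=(E_{11},0)$, the neutral element $h=\diag(E_{11},-E_{11})$ lies in the stabilizer's Lie algebra, and $d\chi(h)=2(u-v)$.
\end{itemize}
This shifts the eigenvalues in the homogeneity comparison by an amount proportional to $u-v$, whose real part is arbitrary. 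So the comparison becomes a condition on $u-v$, not a fixed inequality, and nothing in your sketch shows it holds for every exponent. This fits the introduction's remark that the earlier $p=q$ result \cite{CS} needed a goodness condition on the character.

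Your fallback of comparing twisted modular characters on stabilizers also falls short. It only controls the leading term on each orbit, not the transversal $\mathrm{Sym}^k(N_O)$ contributions needed for $\sigma$-invariance. Moreover, what must vanish are the $s$-odd $\chi_\eta$-equivariant distributions; ``distributions transforming by a nontrivial character'' is not the target.

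\textbf{How the paper avoids this.} The paper does not descend.
\begin{itemize}
\item An odd $\chi_{\eta_0}$-equivariant distribution on $G$ is multiplied by $q_\DD^k$ and extended to $\Mat_n(\DD)$ (Lemma~\ref{lem:weighted-extension}). The new character $\eta_0q_\DD^k|_H$ is still normalized.
\item The twisted Gaussian criterion, Theorem~\ref{thm:Gaussian}, then applies. The non-unitary exponents enter only as the scalar $d\chi(Z)$ in the degree-lowering identity \eqref{eq:lowering}, which works for every value, so no inequality on $u,v$ is ever needed.
\item What remains involves only the compact restriction $\chi|_K$: the character $\sgn(\det)^\eps$, $\det^{m}$, or the trivial character. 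This is the algebraic statement Theorem~\ref{thm:quotient-symmetry} on $(\cA/J)^{K,\chi^{-1}|_K}$, proved with quiver semi-invariants and the block-determinant and Pfaffian exchange identities.
\end{itemize}
To complete your route, you would need a substitute for the nilpotent step that works uniformly in the exponent $u-v$.
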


For the split pair over a local field, the untwisted multiplicity-one
theorem was proved in the non-Archimedean case by Jacquet and Rallis
\cite{JR}, and in characteristic zero by Aizenbud, Gourevitch, and Sayag
\cite{AGS09}. Twisted
generalized linear periods for adjacent block sizes were studied by Lu
\cite{Lu22}; the relation between generalized linear periods and Shalika
periods over division algebras was further studied in \cite{Lu26}.
For inner forms, further multiplicity-one results for untwisted linear
periods, including the near-balanced case for arbitrary irreducible
representations and arbitrary block sizes in the unitary case, were
obtained in \cite{ALMSY}.

For $\DD=\R,\C$ and $p=q$, Chen and Sun proved uniqueness under a
goodness condition on the character \cite{CS}. Their uniqueness theorem
for twisted Shalika periods, together with Geng's comparison of linear
and Shalika periods, also gives uniqueness for arbitrary characters on
generic representations \cite{CS,Geng}.
Compared with these results, Theorem~\ref{thm:main} places no condition
on $\eta$ and no genericity condition on $\pi$, allows arbitrary block
sizes $p,q$, and covers all three Archimedean division algebras.

\subsection{Reduction to invariant distributions}\label{subsec:reduction}
As usual, Theorem~\ref{thm:main} is deduced from a symmetry property of
equivariant distributions on $G$. Define a real-linear involutive
anti-automorphism of $\Mat_n(\DD)$ by
\begin{equation}\label{eq:s}
 s(M)=
 \begin{cases}
 M^t,&\DD=\R,\C,\\
 \overline M^{\,t},&\DD=\HH.
 \end{cases}
\end{equation}
The bar in the quaternionic case denotes quaternionic conjugation.
We also define an involutive automorphism $\alpha:G\longrightarrow G$ by
\[
 \alpha(g)=s(g)^{-1},\qquad g\in G.
\]
Let $L=H\times H$ act on matrices by
\begin{equation}\label{eq:action}
 (h_l,h_r)\cdot M=h_lMh_r^{-1}.
\end{equation}
Define the character $\chi_\eta:L\longrightarrow\C^\times$ by
\[
 \chi_\eta(h_l,h_r)=\eta(h_l)\eta(h_r)^{-1}.
\]

For a Nash manifold $Y$, let $\cS(Y)$ be its space of complex-valued
Schwartz functions and let $\cS'(Y)$ be the continuous complex-linear
dual \cite{AGSch}. We pair distributions with functions.
Suppose that a Nash group $\Gamma$ acts Nashly on $Y$. Its action on
$\cS(Y)$ is defined by
\[
 (\rho(g)f)(x)=f(g^{-1}\cdot x),
 \qquad g\in\Gamma,\quad f\in\cS(Y),\quad x\in Y.
\]
For a smooth character $\chi:\Gamma\longrightarrow\C^\times$, a
 distribution $T\in\cS'(Y)$ is $\chi$-equivariant if
\begin{equation}\label{eq:covariance}
 T(\rho(g)f)=\chi(g)T(f)
 \qquad(g\in\Gamma,\ f\in\cS(Y)).
\end{equation}
For a Nash involution $s:Y\longrightarrow Y$, put $s^*f=f\circ s$ and
$(s_*T)(f)=T(s^*f)$. We say that $T$ is $s$-invariant if
$s_*T=T$, and $s$-odd if $s_*T=-T$.

\begin{theorem}\label{thm:distributions}
For every smooth character $\eta$ of $H$, every
$\chi_\eta$-equivariant tempered distribution on $G$ is $s$-invariant.
\end{theorem}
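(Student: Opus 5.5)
This is the Gelfand–Kazhdan type statement, and I would prove it by the standard Aizenbud–Gourevitch descent: Harish-Chandra descent to the symmetric space, then linearization, then an induction on $n$.

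\emph{Setup.} First I check that $s$ normalizes the action. For $h=(h_1,h_2)$ block diagonal, $s(h_lgh_r^{-1})=s(h_r)^{-1}s(g)s(h_l)$. Hence $s$ intertwines the action of $(h_l,h_r)$ with that of $(s(h_r)^{-1},s(h_l)^{-1})$. Since $\eta$ is a character of $H$, it factors through the reduced norms on the blocks, and $s$ preserves reduced norms. So $\chi_\eta$ is carried to $\chi_\eta$ and $s_*$ preserves the space of $\chi_\eta$-equivariant distributions. It therefore suffices to show that there is no nonzero $\chi_\eta$-equivariant distribution $T$ on $G$ with $s_*T=-T$.

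\emph{Descent to the symmetric space.} Set $\sigma=\diag(1_p,-1_q)$ and consider the map $g\mapsto g\sigma g^{-1}\sigma$. Following \cite{AGS09}, I would pass, via the quotient by the right $H$-action, to the twisted action of $H$ on $X=\{x\in G: x\sigma x\sigma=1\}$. Concretely, the $(L,\chi_\eta)$-equivariant distributions on $G$ correspond to $H$-equivariant distributions on the symmetric space $G/H$, taken with the twist from $\eta\otimes\eta^{-1}$. The anti-involution becomes $x\mapsto s(x)$, or a conjugate of it. I would then apply generalized Harish-Chandra descent along semisimple orbits. At a semisimple point the stabilizer pair is a product of pairs of the same type $(\GL_{p'}(\DD'),\GL_{p'}\times\GL_{q'})$ of smaller size, and of pairs $(\GL_m(\DD'),\GL_m(\DD'))$ or $(\GL_m(E),\GL_m(\DD))$ of "diagonal/Galois" type, with $\DD'$ ranging over $\R,\C,\HH$. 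This is why all three division algebras must be treated at once. By induction on $n$, the odd distributions are then supported on the nilpotent cone of the tangent space $\fp=\{\begin{pmatrix}0&A\\B&0\end{pmatrix}\}$. The action there is $(h_1,h_2)\cdot(A,B)=(h_1Ah_2^{-1},h_2Bh_1^{-1})$, and $s$ acts by $(A,B)\mapsto(s(B),s(A))$, up to the sign choices. In this induction I also need the diagonal and Galois-type pieces, where the character becomes $\eta'\otimes\eta'^{-1}$-type. For those I would use the known Gelfand–Kazhdan results for $\GL_m(\DD)$ under the adjoint action with the $s$-twist.

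\emph{Nilpotent cone (main obstacle).} The hardest step is to rule out an odd $\chi$-equivariant distribution $T$ on $\fp$ supported on the nilcone $\N$. I would combine two tools. The first is Fourier transform invariance: with respect to a $\chi$-compatible, $s$-invariant trace form, $\widehat T$ is again odd and nilcone-supported. The second is homogeneity: $T$ and $\widehat T$ are both homogeneous, so by the Aizenbud–Gourevitch homogeneity lemma it suffices to check, on each nilpotent $H$-orbit $\mathcal O$, a degree condition. That condition says the modulus of the scaling action cannot match, i.e. $\dim$ of the normal data is less than half the codimension, or the required $\cS'$-values vanish. For $\DD=\R$ and $\C$ this is the "regular" pair verification of \cite{AGS09}. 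The twist by $\eta$ is harmless on $\fp$ because the orbit stabilizers contain $\GL$-factors on which $\eta$ restricts to a character with controlled modular factor. I would explicitly parametrize nilpotent orbits by signed/ab-diagram data (pairs of chains $V_1\to V_2\to V_1$). For each orbit I would exhibit an $\mathfrak{sl}_2$-triple $(e,h,f)$ with $h\in\mathfrak h$ and verify that $\tr(\mathrm{ad}(h)|_{\fp^f})<\dim\fp$. This inequality forces the vanishing. For $\HH$ the same combinatorics apply after replacing dimensions by real dimensions, since every factor scales by $4$. The delicate point, which I expect to be hardest, is the orbits where the character $\chi_\eta$ of the stabilizer is nontrivial. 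There I would first check that $s$ preserves every nilpotent orbit, using the diagram classification. Then I would show that the $\chi$-equivariant odd distributions on each orbit, and on its transversal jets, vanish because the stabilizer modulus differs from $\chi_\eta$ on a split torus element.
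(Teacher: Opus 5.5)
\textbf{There is a genuine gap.} It sits exactly at the step you call the main obstacle: you never actually handle the twist, and the mechanism you propose for it cannot work for every $\eta$.

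Write $\eta=\eta_1\otimes\eta_2$ and $\mu=\eta_1\eta_2^{-1}$. Unless $\mu=1$, $\eta$ does not extend to $G$, so passing to $G/H$ gives line-bundle-valued distributions. The character inherited at a closed double coset $HgH$ is $h\mapsto\eta(h)\eta(g^{-1}hg)^{-1}$ on $H\cap gHg^{-1}$. At $g=1$ it is trivial, and you are back to essentially the untwisted problem. Elsewhere it is not trivial.

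Take $p=q$ and $w=\left(\begin{smallmatrix}0&1_p\\1_p&0\end{smallmatrix}\right)$. Then:
\begin{itemize}
\item $HwH$ is closed and $s(w)=w$;
\item the stabilizer $\{(whw^{-1},h)\}$ is a full copy of $H$, and the slice is all of $\fp$;
\item the descendant therefore has the same size $n$, with character $(a,d)\mapsto\mu(a)^{\pm1}\mu(d)^{\mp1}$ and involution $(B,C)\mapsto(s(B),s(C))$ rather than $(s(C),s(B))$.
\end{itemize}
Induction on $n$ does not reach this descendant, and it is not an instance of the statement being proved.

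On its nilcone the homogeneity degrees are shifted by $d\chi(h)$. For example, the shift is $\pm2\lambda$ when $\mu=|\cdot|^\lambda$ and $e=(E_{11},0)$. This takes every complex value as $\eta$ varies, so the untwisted inequality $\tr(\mathrm{ad}(h)|_{\fp^f})<\dim\fp$ is not the relevant criterion. For special $\mu$ the modulus genuinely matches the character. For $\DD=\R$, $p=2$ and $\mu=|\cdot|^{\pm1}$, the orbit $\{(B,0):\operatorname{rk}B=1\}$ carries a $\chi$-equivariant measure. So your claim that the stabilizer modulus differs from $\chi_\eta$ on a split torus element fails for a discrete family of $\eta$. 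For those $\eta$ you would need a separate evenness argument on orbits and on transversal jets, and the proposal contains none. It is no accident that the earlier result \cite{CS} needs a goodness condition on the character.

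Your claim that $s$ preserves every nilpotent orbit is also false, already at $g=1$. There $(A,B)\mapsto(s(B),s(A))$ exchanges the orbit of $(A,0)$ with that of $(0,s(A))$. Separately, the adjoint-action Gelfand--Kazhdan statements with a character $\eta'\otimes\eta'^{-1}$ that you invoke for the diagonal and Galois pieces are not standard results.

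\textbf{How the paper avoids this.} It does not descend at all.
\begin{enumerate}
\item It writes $\eta=\eta_0\theta|_H$, with $\theta$ an $s$-invariant Nash character of $G$, which reduces to normalized $\eta_0$.
\item It extends a putative odd distribution from $G$ to $\Mat_n(\DD)$ by multiplying by $q_\DD^k$ (Lemma~\ref{lem:weighted-extension}). This only shifts $u,v$, so the character stays normalized.
\item It applies the twisted Gaussian criterion, Theorem~\ref{thm:Gaussian}.
\end{enumerate}
In that criterion the continuous parameters of the character enter only through the term $d\chi(Z)T(Q\gamma)$ in \eqref{eq:lowering}, which merely lowers degree. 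So the required statement, that $s^*$ acts trivially on $(\cA/J)^{K,\chi^{-1}|_K}$, depends only on $\chi|_K$: the integer $m$, the sign $\eps$, or nothing. It is then settled algebraically, using quiver semi-invariants of the four-cycle together with Propositions~\ref{prop:determinant} and~\ref{prop:Pfaffian}. This insensitivity to $u,v$ is exactly what lets the paper cover every $\eta$ with no exceptional characters.
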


\subsection{Organization}\label{subsec:organization}
The proof follows the implications
\[
 \text{Theorem~\ref{thm:main}}
 \ \Longleftarrow\
 \text{Theorem~\ref{thm:distributions}}
 \ \Longleftarrow\
 \text{Proposition~\ref{prop:matrix-distributions}}
 \ \Longleftarrow\
 \text{Theorem~\ref{thm:Gaussian}}
 +\text{Theorem~\ref{thm:quotient-symmetry}}.
\]
Sections~\ref{sec:Gaussian}--\ref{sec:matrix-model} reduce the
matrix-space distribution problem to symmetry in a polynomial quotient.
Sections~\ref{sec:identities}--\ref{sec:semi-invariants} establish this
symmetry using path identities, determinant and Pfaffian identities,
and quiver invariant theory.
Section~\ref{sec:conclusion} extends the distribution result to $G$
and deduces multiplicity one from the Gelfand--Kazhdan criterion.

\subsection{Conventions}
For a real vector space
$X$, the notation $\C[X]$ means the algebra of complex-valued
polynomials in real coordinates. Unless a complex vector space $V$
is explicitly regarded as a real vector space, $\C[V]$ denotes
$\operatorname{Sym}_{\C}(V^\vee)$, where $V^\vee$ is the complex-linear
dual. A based free module is a finite-rank free module equipped with a fixed
ordered basis. Direct sums carry the concatenated bases.

\section*{Acknowledgments}
This work is a continuation and application of a twisted version of Gaussian criterion developed in \cite{CCSP}.
The authors are grateful to their advisor, professor Jun Yu, for his guidance and valuable suggestions. 
The authors also acknowledge the assistance of GPT-6 Astra in locating references on computation of semi-invariants of quiver representations. 
This work was supported by NSFC Grants 12431001 and 11971036.

\section{Characters and the Gaussian criterion}\label{sec:Gaussian}

\subsection{Smooth characters}
Write $\HH=\C\oplus\C j$, where $j^2=-1$ and $jz=\bar z j$.  The
standard real-algebra embedding
\begin{equation}\label{eq:embedding}
 \iota:\Mat_r(\HH)\longrightarrow\Mat_{2r}(\C),
 \qquad
 Z+Wj\longmapsto
 \begin{pmatrix}Z&W\\-\overline W&\overline Z\end{pmatrix}
\end{equation}
defines the reduced norm
\[
 \N(A):=\operatorname{Nrd}(A)=\det_\C\iota(A).
\]
The function $\N$ is a nonnegative real polynomial on
$\Mat_r(\HH)$ and is positive precisely on $\GL_r(\HH)$.
Put $\absC z=z\bar z$ for $z\in\C$.

The smooth characters $\vartheta:\GL_r(\DD)\to\C^\times$ are
\begin{align}
 \vartheta(A)&=\sgn(\det A)^\eps|\det A|^z,
 &&\DD=\R,\quad \eps\in\{0,1\},\ z\in\C;
 \label{eq:char-R}\\
 \vartheta(A)&=(\det A)^m\absC{\det A}^{z},
 &&\DD=\C,\quad m\in\Z,\ z\in\C;
 \label{eq:char-C}\\
 \vartheta(A)&=\N(A)^z,
 &&\DD=\HH,\quad z\in\C.
 \label{eq:char-H}
\end{align}

We call a character $\eta_0$ of $H$ \emph{normalized} if it has the form
\begin{equation}\label{eq:normalized}
 \eta_0(A,D)=
 \begin{cases}
 \sgn(\det A)^\eps|\det A|^u|\det D|^v,
       &\DD=\R,\quad \eps\in\{0,1\},\quad u,v\in\C,\\
 (\det A)^m\absC{\det A}^u\absC{\det D}^v,
       &\DD=\C,\quad m\in\Z,\quad u,v\in\C,\\
 \N(A)^u\N(D)^v,
       &\DD=\HH,\quad u,v\in\C.
 \end{cases}
\end{equation}

By \eqref{eq:char-R}--\eqref{eq:char-H}, every smooth character $\eta$ of
$H$ can be written
\begin{equation}\label{eq:character-normalization}
 \eta=\eta_0\,\theta|_H,
\end{equation}
where $\eta_0$ is normalized and $\theta$ is an $s$-invariant
Nash character of $G$.

Theorem~\ref{thm:Gaussian} is a natural extension of the Gaussian
criterion in \cite[Section~3]{CCSP} from invariant tempered
distributions to tempered distributions equivariant under a
smooth character. Its proof follows the same Gaussian argument,
with the character incorporated into the averaging and
infinitesimal equivariance relations.

Let a real reductive group $L$ act linearly on a finite-dimensional
real vector space $X$. Fix a maximal compact subgroup $K$ of $L$ and a
Cartan decomposition $\fl=\fk\oplus\fp$. Suppose that $X$ has a
$K$-invariant positive definite inner product $\langle,\rangle$. Put
\begin{equation}\label{eq:Gaussian-ideal}
 \cA=\C[X],\qquad q_Z(x)=\langle Zx,x\rangle,
 \qquad J=(q_Z:Z\in\fp)\subset\cA.
\end{equation}
Let $s$ be an orthogonal linear involution of $X$ and let $\sigma$ be an
automorphism of $L$ preserving $K$ and $\fp$, such that
$s(gx)=\sigma(g)s(x)$ for all $g\in L, x\in X$. Fix a smooth character $\chi$ of $L$ satisfying
$\chi\circ\sigma=\chi$. For $k\in K$ and $Z\in\fp$, $K$-invariance gives
\[
 q_Z(kx)=q_{\operatorname{Ad}(k^{-1})Z}(x).
\]
Differentiating $s(gx)=\sigma(g)s(x)$ gives
$s(Zx)=d\sigma(Z)s(x)$. Since $s$ is an orthogonal involution,
\[
 q_Z(sx)=\langle s(Zsx),x\rangle=q_{d\sigma(Z)}(x).
\]
Hence $J$ is $K$- and $s^*$-stable. Denote the (complex-linear) $K$-action on $\cA$ and $\cA/J$ by $\rho$, and for a character $\psi$ of $K$, we set
\[
 (\cA/J)^{K,\psi}
 =\{[P]:[P(k\,\cdot)]=\rho(k^{-1})[P]=\psi(k)[P]\text{ for every }k\in K\}.
\]

\begin{theorem}\label{thm:Gaussian}
If $s^*$ acts identically on $(\cA/J)^{K,\chi^{-1}|_K}$, then every
$\chi$-equivariant tempered distribution on $X$ is $s$-invariant.
\end{theorem}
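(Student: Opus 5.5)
We decompose $T$ into its $s$-even and $s$-odd parts and show that the odd part vanishes. Since $\chi\circ\sigma=\chi$ and $s(gx)=\sigma(g)s(x)$, the distribution $s_*T$ is again $\chi$-equivariant:
\[
 (s_*T)(\rho(g)f)=T(\rho(\sigma(g))s^*f)=\chi(\sigma(g))T(s^*f)=\chi(g)(s_*T)(f).
\]
Hence $T_-=\tfrac12(T-s_*T)$ is a $\chi$-equivariant $s$-odd tempered distribution, and it suffices to prove $T_-=0$. Following the Gaussian argument of \cite[Section~3]{CCSP}, we test $T_-$ against functions of the form $P(x)e^{-\langle x,x\rangle}$ with $P\in\cA$. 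These functions span a dense subspace of $\cS(X)$: they are the Hermite-type functions, whose density in $\cS(X)$ is standard. So it is enough to show that the functional
\[
 \lambda(P)=T_-\bigl(P\,e^{-\langle x,x\rangle}\bigr)
\]
vanishes identically on $\cA$.

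The key steps, in order, are as follows.

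\textbf{(1) $\lambda$ kills $J$.} For $Z\in\fp$, differentiating the equivariance along $\exp(tZ)$ gives the identity $T_-(-Z_X f)=d\chi(Z)\,T_-(f)$, where $Z_X$ is the vector field $x\mapsto Zx$. Since $Z$ is symmetric for $\langle,\rangle$, we have
\[
 Z_X\bigl(Pe^{-\langle x,x\rangle}\bigr)=\bigl(Z_XP-2q_Z P\bigr)e^{-\langle x,x\rangle}.
\]
This yields
\[
 \lambda(q_Z P)=\tfrac12\lambda\bigl((Z_X+d\chi(Z))P\bigr).
\]
A naive reading of this relation does not give $\lambda(J)=0$ directly. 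Instead, we run the filtration argument of \cite{CCSP}: inductively on degree, $\lambda$ is determined by its values on a complement of $J$ in each degree. The first-order operator $Z_X+d\chi(Z)$ preserves degree, while multiplication by $q_Z$ raises degree by $2$. More carefully, we apply the Fourier/Gaussian trick: replacing $e^{-\langle x,x\rangle}$ by $e^{-t\langle x,x\rangle}$ and using homogeneity, one shows that the graded pieces of $\lambda$ factor through $\operatorname{gr}\cA/J=\cA/J$, since $J$ is homogeneous. I would follow CCSP's proof verbatim here, with $d\chi$ inserted as a scalar shift. \emph{This is the main obstacle:} one must check that the scalar $d\chi(Z)$ does not spoil the induction. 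It only contributes lower-order terms of the same degree, so the induction on degree still closes.

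\textbf{(2) $K$-equivariance.} The Gaussian is $K$-invariant, so for $k\in K$ we get $\lambda(\rho(k)P)=\chi(k)\lambda(P)$. Averaging over the compact group $K$, $\lambda$ factors through the projection of $\cA/J$ onto its $(K,\chi^{-1}|_K)$-isotypic part $(\cA/J)^{K,\chi^{-1}|_K}$, with the convention $[P(k\cdot)]=\psi(k)[P]$ of the statement. In other words, $\lambda(P)=\lambda(\mathrm{pr}P)$, where $\mathrm{pr}P=\int_K\chi(k)\,[P(k\,\cdot)]\,dk$.

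\textbf{(3) Oddness.} The map $s$ is orthogonal, so the Gaussian is $s$-invariant, and oddness of $T_-$ gives $\lambda(s^*P)=-\lambda(P)$. Since $J$ is $s^*$-stable and $s$ normalizes $K$ via $\sigma$ with $\chi\circ\sigma=\chi$, the projection $\mathrm{pr}$ commutes with $s^*$. By hypothesis $s^*$ acts trivially on the image of $\mathrm{pr}$. Hence
\[
 \lambda(P)=\lambda(s^*\mathrm{pr}P)=-\lambda(\mathrm{pr}P)=-\lambda(P),
\]
so $\lambda=0$, therefore $T_-=0$, and $T$ is $s$-invariant.
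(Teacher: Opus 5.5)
Your argument has a genuine gap in step (1), and it carries into step (3). Step (3) applies $\lambda$ to the class $\mathrm{pr}P\in\cA/J$ and uses $s^*\mathrm{pr}P=\mathrm{pr}P$ there. That requires $\lambda$ to be a well-defined functional on $\cA/J$, i.e.\ $\lambda(J)=0$. This is false in general, not merely hard to derive.

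Take $L=\R^\times$ acting on $X=\R$ by scaling, $K=\{\pm1\}$, $J=(x^2)$, $s=-1$, $\chi(a)=|a|$, and $T$ Lebesgue measure. All hypotheses of the theorem hold, yet $\lambda(x^2)=\int_\R x^2e^{-x^2}\,dx=\sqrt\pi/2\neq0$. This is exactly what your own relation $\lambda(q_Z)=\tfrac12 d\chi(Z)\lambda(1)$ predicts.

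The remark that the ``graded pieces of $\lambda$ factor through $\operatorname{gr}\cA/J$'' does not repair this. On $\cA_{\le d}$, $\lambda$ has no well-defined top-degree part until its values on $\cA_{\le d-1}$ are known to vanish. Knowing that $\lambda$ is ``determined by its values on a complement of $J$'' is not the same as knowing those values are zero. Your step (3) then concludes $\lambda=0$ in a single stroke rather than degree by degree.

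The missing idea is to run steps (1)--(3) \emph{inside} an induction on degree, which is how the paper argues.
\begin{itemize}
\item Suppose $\lambda$ vanishes on $\cA_{\le d-1}$. Because $J$ is generated by the homogeneous quadratics $q_Z$, every element of $J$ of degree at most $d$ can be written $\sum_j q_{Z_j}Q_j$ with $\deg Q_j\le d-2$ (compare homogeneous components).
\item Your identity $\lambda(q_ZQ)=\tfrac12\lambda((Z_X+d\chi(Z))Q)$ then gives $\lambda(J\cap\cA_{\le d})=0$, but only thanks to the induction hypothesis.
\item Now let $\deg P\le d$. The polynomials $\mathsf p_\chi P=\int_K\chi(k)P(k\,\cdot)\,dk$ and $s^*\mathsf p_\chi P$ again have degree at most $d$. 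By the hypothesis on $(\cA/J)^{K,\chi^{-1}|_K}$, they differ by an element of $J$.
\item Hence $\lambda(P)=\lambda(\mathsf p_\chi P)=\lambda(s^*\mathsf p_\chi P)=-\lambda(\mathsf p_\chi P)$, so $\lambda(P)=0$.
\end{itemize}
The paper phrases the same step as $T(P\gamma)=T(P^-\gamma)$ with $P^-=\tfrac12(\mathsf p_\chi P-s^*\mathsf p_\chi P)\in J$, followed by the lowering formula.

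With this reorganization, your remaining ingredients are correct and suffice: the equivariance of $s_*T$, the density of polynomials times the Gaussian, the lowering identity, and the commutation of the projection with $s^*$.
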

\begin{proof}
Since $\chi\circ\sigma=\chi$, the distribution $s_*T$ is
$\chi$-equivariant whenever $T$ is,
so the odd part of a $\chi$-equivariant distribution is again
$\chi$-equivariant. It therefore suffices to show that an $s$-odd
$\chi$-equivariant distribution $T$ vanishes. Set
$\gamma(x)=e^{-\|x\|^2/2}$. By Gaussian totality
\cite[Lemma~3.1]{CCSP}, the functions $P\gamma$, $P\in\cA$, are dense in
$\cS(X)$. We show that $T(P\gamma)=0$ by induction on $d=\deg P$.

Normalize Haar measure on $K$ to have total measure one. For $P\in\cA$, let
\[
 \mathsf p_\chi P=\int_K\chi(k)^{-1}\rho(k)P\,dk,
 \qquad
 P^-=\tfrac12(\mathsf p_\chi P-s^*\mathsf p_\chi P).
\]
The projection $\mathsf p_\chi$ preserves polynomial degree and commutes
with $s^*$. Its image satisfies
$\rho(k)\mathsf p_\chi P=\chi(k)\mathsf p_\chi P$.
Since $\gamma$ is fixed by $K$ and $s$, $K$-equivariance and $s$-oddness of $T$ give
\begin{equation}\label{eq:projected-Gaussian}
 T(P\gamma)=T(P^-\gamma).
\end{equation}
The class of $P^-$ lies in $(\cA/J)^{K,\chi^{-1}|_K}$ and satisfies
$s^*[P^-]=-[P^-]$. By hypothesis $s^*[P^-]=[P^-]$, so $P^-\in J$. As
$J$ is generated by homogeneous quadratics and $\deg P^-\leq d$, we may
write
\begin{equation}\label{eq:quadratic-decomposition}
 P^-=\sum_j q_{Z_j}Q_j,\qquad \deg Q_j\leq d-2.
\end{equation}
In particular $P^-=0$ if $d<2$, and then $T(P\gamma)=0$ by
\eqref{eq:projected-Gaussian}.

It remains to lower the degree of the terms $T(q_ZQ\gamma)$; this is
where the Gaussian enters. For $Z\in\fp$, put
\[
 D_ZQ(x)=\left.\frac{d}{dt}\right|_{t=0}Q(e^{-tZ}x),
 \qquad
 d\chi(Z)=\left.\frac{d}{dt}\right|_{t=0}\chi(e^{tZ}).
\]
Direct differentiation gives
\begin{equation}\label{eq:Gaussian-derivative}
 D_Z\gamma(x)
 =\left.\frac{d}{dt}\right|_{t=0}
   \exp\left(-\frac12\|e^{-tZ}x\|^2\right)
 =\langle Zx,x\rangle\gamma(x)
 =q_Z(x)\gamma(x).
\end{equation}
Differentiating \eqref{eq:covariance} along $e^{tZ}$ in the Schwartz
topology and applying the Leibniz rule to $D_Z(Q\gamma)$ together with
\eqref{eq:Gaussian-derivative} gives
\begin{equation}\label{eq:lowering}
 T(q_ZQ\gamma)=d\chi(Z)T(Q\gamma)-T((D_ZQ)\gamma).
\end{equation}
Both terms on the right involve polynomial factors of degree at most
$\deg Q\leq d-2$. Thus \eqref{eq:projected-Gaussian},
\eqref{eq:quadratic-decomposition}, \eqref{eq:lowering} and the
induction hypothesis give $T(P\gamma)=0$. Hence $T=0$.
\end{proof}

\section{The matrix model and the Gaussian quotient}\label{sec:matrix-model}

\subsection{Matrix coordinates}
Return to $L=H\times H$ acting on $X=\Mat_n(\DD)$.
For $r\geq1$, set
\[
 K_{\R}(r)=\OO_r(\R),\qquad
 K_{\C}(r)=\UU(r),\qquad
 K_{\HH}(r)=\Sp(r),
\]
where
\[
 \Sp(r)=\{g\in\GL_r(\HH):\overline g^{\,t}g=I_r\}.
\]
Then a maximal compact subgroup of $L$ is
\begin{equation}\label{eq:compact}
 K=(K_{\DD}(p)\times K_{\DD}(q))^2.
\end{equation}
Equip $X$ with the $K$-invariant real Frobenius inner product
\[
 \langle M,N\rangle=\operatorname{Re}\tr(M^*N).
\]
Here $M^*=M^t$ for $\DD=\R$, and
$M^*=\overline M^{\,t}$ for $\DD=\C,\HH$,
where the bar denotes complex or quaternionic conjugation,
respectively.

With respect to the standard Cartan decomposition
$\fl=\fk\oplus\fp$, every $Z=(Z_l,Z_r)\in\fp$ has
\[
 Z_l=\begin{pmatrix}A_l&0\\0&D_l\end{pmatrix},\qquad
 Z_r=\begin{pmatrix}A_r&0\\0&D_r\end{pmatrix},
\]
where
\[
 A_l^*=A_l,\qquad D_l^*=D_l,\qquad
 A_r^*=A_r,\qquad D_r^*=D_r.
\]
The involution $s$ of \eqref{eq:s} is orthogonal for this
inner product and normalizes the $L$-action through
\[
 \sigma(h_l,h_r)=(\alpha(h_r),\alpha(h_l)).
\]
This automorphism preserves $K$ and $\fp$. The character formulas
\eqref{eq:char-R}--\eqref{eq:char-H} give $\eta_0\circ s=\eta_0$
for every normalized character $\eta_0$, and hence
$\chi_{\eta_0}\circ\sigma=\chi_{\eta_0}$.

We keep $X=\Mat_n(\DD)$ as a real vector space in the Gaussian
criterion and use its algebraic complexification only to compute
$\cA/J$:
\[
 X_{\C}=X\otimes_{\R}\C,
 \qquad
 \cA=\C[X]\simeq\C[X_{\C}].
\]
The three algebraic models are as follows.

If $\DD=\R$, identify $X_{\C}$ with $\Mat_n(\C)$ and let
$M=(x_{ij})$ be the matrix coordinate. Then
\[
 \cA\simeq\C[M],\qquad Y:=M^t,
\]
where $\C[M]$ denotes the polynomial ring in the entries of $M$.

If $\DD=\C$, write a matrix in the underlying real vector space as
$A+\mathrm{i}B$, with $A,B\in\Mat_n(\R)$.  The linear change of
coordinates
\[
 M=A+\mathrm{i}B,\qquad
 Y=A^t-\mathrm{i}B^t
\]
gives
\[
 X_{\C}\simeq\Mat_n(\C)\oplus\Mat_n(\C),
 \qquad
 \cA\simeq\C[M,Y].
\]

If $\DD=\HH$, the complex-linear extension of the real-algebra
embedding \eqref{eq:embedding} is the isomorphism
\[
 \iota_{\C}:X_{\C}\xrightarrow{\sim}\Mat_{2n}(\C),
 \qquad A\otimes z\longmapsto z\,\iota(A).
\]
Put
\[
 \mathsf J_r=\begin{pmatrix}0&I_r\\-I_r&0\end{pmatrix}.
\]
Equip $\C^{2n}$ with the complex-bilinear symplectic form
\[
 \omega(u,v)=u^t\mathsf J_n v \qquad(u,v\in\C^{2n}),
\]
Direct calculation from \eqref{eq:embedding} gives
\[
 \iota(A^*)=\mathsf J_n^{-1}\iota(A)^t\mathsf J_n
 \qquad(A\in\Mat_n(\HH)).
\]

To make the $p+q$ block decomposition compatible with this model,
let $e_1,\ldots,e_{2n}$ be the original standard basis and use the
ordered basis
\[
 (e_1,\ldots,e_p,e_{n+1},\ldots,e_{n+p};
 e_{p+1},\ldots,e_n,e_{n+p+1},\ldots,e_{2n}).
\]
The corresponding change-of-basis matrix is
\[
 P=\begin{pmatrix}
 I_p&0&0&0\\
 0&0&I_q&0\\
 0&I_p&0&0\\
 0&0&0&I_q
 \end{pmatrix},
\]
with row block sizes $p,q,p,q$ and column block sizes $p,p,q,q$.
Set
\[
 \mathsf J=P^t\mathsf J_nP=\operatorname{diag}(\mathsf J_p,\mathsf J_q),
 \qquad \widetilde\iota(A)=P^{-1}\iota(A)P,
\]
and let $\widetilde\iota_{\C}$ be the complex-linear extension of
$\widetilde\iota$. In the reordered basis, the preceding adjoint
identity becomes
\[
 \widetilde\iota(A^*)
 =\mathsf J^{-1}\widetilde\iota(A)^t\mathsf J
 \qquad(A\in\Mat_n(\HH)).
\]
Moreover, for $B\in\Mat_p(\HH)$ and $C\in\Mat_q(\HH)$,
\[
 \widetilde\iota(\operatorname{diag}(B,C))
 =\operatorname{diag}(\iota(B),\iota(C)).
\]
Thus the two diagonal blocks have sizes $2p$ and $2q$.

For a linear map $B:V\to W$ between based symplectic spaces
$(V,\omega_V)$ and $(W,\omega_W)$, its symplectic adjoint
$B^\sharp:W\to V$ is defined by
\[
 \omega_W(Bv,w)=\omega_V(v,B^\sharp w)
 \qquad(v\in V,\ w\in W).
\]
If the form matrices in the chosen bases are
$\mathsf J_V,\mathsf J_W$, this identity gives
\begin{equation}\label{eq:sharp}
 B^\sharp=\mathsf J_V^{-1}B^t\mathsf J_W.
\end{equation}
In this notation, the transformed adjoint identity reads
\[
 \widetilde\iota(A^*)=\widetilde\iota(A)^\sharp\qquad(A\in\Mat_n(\HH)).
\]
Using $\widetilde\iota_{\C}$ to identify $X_{\C}$ with $\Mat_{2n}(\C)$,
let $M=(m_{ij})$ be the matrix coordinate and set
\[
 \cA\simeq\C[M],\qquad
 Y:=M^\sharp=\mathsf J^{-1}M^t\mathsf J.
\]
Thus $M\mapsto Y$ is the complex-linear extension of the quaternionic
adjoint.

In these models, $M$ and $Y$ are independent matrix coordinates only
when $\DD=\C$. For $\DD=\R$ or $\HH$, the entries of $M$ give
coordinates, and $Y$ is determined by $M$ via $Y=M^t$ or
$Y=M^\sharp$, respectively.

Let $\cK_{\DD}(r)$ be the complexification of $K_{\DD}(r)$.  Thus
\[
 \cK_{\R}(r)=\OO_r(\C),\qquad
 \cK_{\C}(r)=\GL_r(\C),\qquad
 \cK_{\HH}(r)=\Sp_{2r}(\C),
\]
and
\begin{equation}\label{eq:complexified-compact}
 \cK=(\cK_{\DD}(p)\times\cK_{\DD}(q))^2.
\end{equation}
The action of $K$ on $X$ extends algebraically to an action of
$\cK$ on $X_{\C}$. In the coordinates $M,Y$, this action is
\[
 (h_l,h_r)\cdot(M,Y)
 =\bigl(h_lMh_r^{-1},h_rYh_l^{-1}\bigr),
 \qquad (h_l,h_r)\in\cK.
\]
For $\DD=\R,\C$, use the standard ordered basis of $\C^n$ with
its $p+q$ block decomposition. For $\DD=\HH$, use the reordered
basis displayed above, with its $2p+2q$ block decomposition. Write
\begin{equation}\label{eq:blocks}
 M=\begin{pmatrix}a&b\\c&d\end{pmatrix},\qquad
 Y=\begin{pmatrix}y_a&y_c\\y_b&y_d\end{pmatrix}.
\end{equation}
For $\DD=\R$, the identity $Y=M^t$ gives
\[
 y_a=a^t,\qquad y_b=b^t,\qquad y_c=c^t,\qquad y_d=d^t.
\]
For $\DD=\C$, the blocks $y_a,y_b,y_c,y_d$ are independent of
$a,b,c,d$; their subscripts indicate the transposed positions of
the corresponding blocks of $M$.
For $\DD=\HH$, the identity $Y=M^\sharp$ gives
\[
 \begin{aligned}
 y_a&=\mathsf J_p^{-1}a^t\mathsf J_p=a^\sharp,
 &y_b&=\mathsf J_q^{-1}b^t\mathsf J_p=b^\sharp,\\
 y_c&=\mathsf J_p^{-1}c^t\mathsf J_q=c^\sharp,
 &y_d&=\mathsf J_q^{-1}d^t\mathsf J_q=d^\sharp.
 \end{aligned}
\]

\subsection{The Gaussian ideal and the symmetry target}

\begin{proposition}\label{prop:quadratic-ideal}
Under the identifications of $\cA=\C[X]$ with the matrix coordinate
rings above, the image of the Gaussian ideal \eqref{eq:Gaussian-ideal},
still denoted by $J$, is
\begin{equation}\label{eq:moment}
 J=(\text{entries of }ay_a+by_b,\ cy_c+dy_d,
                         \ y_aa+y_cc,\ y_bb+y_dd).
\end{equation}
\end{proposition}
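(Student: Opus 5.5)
The plan is to compute the quadratic forms $q_Z(x)=\langle Zx,x\rangle$ for $Z\in\fp$ explicitly and then rewrite them in the complexified coordinates $M,Y$. For $Z=(Z_l,Z_r)\in\fp$, the infinitesimal action \eqref{eq:action} gives $Zx=Z_lx-xZ_r$. Hence
\[
 q_Z(x)=\operatorname{Re}\tr(x^*Z_lx)-\operatorname{Re}\tr(x^*xZ_r)
 =\operatorname{Re}\tr(Z_l\,xx^*)-\operatorname{Re}\tr(Z_r\,x^*x).
\]
Here we use cyclicity of $\operatorname{Re}\tr$, which holds also over $\HH$. With $Z_l=\diag(A_l,D_l)$ and $Z_r=\diag(A_r,D_r)$, only the diagonal blocks of $xx^*$ and $x^*x$ contribute. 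These blocks are $aa^*+bb^*$, $cc^*+dd^*$, $a^*a+c^*c$ and $b^*b+d^*d$, where $a,b,c,d$ are the blocks of the real point $x$.

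So $J$ is spanned, as $A_l,\dots,D_r$ range over hermitian matrices, by the functions $x\mapsto\operatorname{Re}\tr(A\,S(x))$, where $S(x)$ is one of these four hermitian block expressions. The first step is to show that the complex span of $\{\operatorname{Re}\tr(A\,S):A^*=A\}$ equals the complex span of the real-coordinate functions of the hermitian matrix $S$. This follows from nondegeneracy of the real pairing $\operatorname{Re}\tr(AS)$ on the real vector space of hermitian matrices over $\DD$. Consequently $J$ is generated by all real coordinates of the four hermitian matrices.

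The second step is to translate these real coordinates into the complex coordinates $M,Y$.

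\textbf{Case $\DD=\R$.} Here $x^*=x^t=Y$, and the coordinates of the symmetric matrix $aa^t+bb^t$ are its entries. These are the entries of $ay_a+by_b$, and the other three expressions are similar.

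\textbf{Case $\DD=\C$.} Write $x=A+\mathrm iB$, so that $x^*=A^t-\mathrm iB^t=Y$ on real points. The real and imaginary parts of the entries of $aa^*+bb^*$ are real polynomials whose complex span equals the span of the complex-valued entries and their conjugates. For a hermitian matrix, the conjugates of the entries are again entries, so it suffices to take the entries themselves. These entries are, as polynomials in the real coordinates, the entries of $ay_a+by_b$ after substituting $M=A+\mathrm iB$ and $Y=A^t-\mathrm iB^t$.

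\textbf{Case $\DD=\HH$.} Apply $\widetilde\iota$, which is a real-algebra embedding satisfying $\widetilde\iota(x^*)=\widetilde\iota(x)^\sharp$ and respecting the block decomposition. Then $\widetilde\iota(aa^*+bb^*)=ay_a+by_b$ in the $2p\times2p$ block. The four real coordinates of a quaternion entry $z+wj$ are complex-linear combinations of $z,\bar z,w,\bar w$, and these all appear among the entries of $\iota$ of that quaternion. So the complex span of the real coordinates of $S$ coincides with the span of the entries of $\widetilde\iota(S)$, and these are the entries listed in \eqref{eq:moment}.

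I expect the main obstacle to be the bookkeeping in the quaternionic case. There one must check that $\widetilde\iota$ carries the block products $aa^*$ and $a^*a$, taken in $\Mat_n(\HH)$ blocks, to $a\,a^\sharp$ and $a^\sharp a$ in the reordered $2p+2q$ blocks, which is where the choice of the basis $P$ is used. One must also check that the entries of $\iota(S)$ span exactly the real coordinate functions, neither more nor less. The remaining steps are linear algebra using nondegeneracy of the trace pairing.
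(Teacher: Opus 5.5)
Your proposal is correct and follows essentially the paper's argument: you write $q_Z=\operatorname{Re}\tr(Z_lxx^*)-\operatorname{Re}\tr(Z_rx^*x)$, keep only the four diagonal blocks, and use nondegeneracy of a trace pairing. The only difference is the order of steps. You apply the real pairing on Hermitian matrices first and then pass from real coordinates to complex entries via conjugation or $\iota$. The paper instead rewrites $q_Z$ on $X_{\C}$ as $\tr(Z_lMY)-\tr(Z_rYM)$ (halved for $\DD=\HH$) and uses the complex trace pairing on the symmetric, arbitrary, or $\sharp$-symmetric matrices spanned by the Hermitian ones.
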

\begin{proof}
For $Z=(Z_l,Z_r)\in\fp$,
\[
 q_Z(M)=\operatorname{Re}\tr(Z_lMM^*)
       -\operatorname{Re}\tr(Z_rM^*M).
\]
For $\DD=\R,\C$, both traces are real by self-adjointness of
$Z_l,Z_r$ and cyclicity of the trace, so $\operatorname{Re}$
can be omitted.
For $\DD=\HH$, the reordered embedding $\widetilde\iota$ satisfies
$\operatorname{Re}\tr(T)=\tfrac12\tr(\widetilde\iota(T))$ and
$\widetilde\iota(T^*)=\widetilde\iota(T)^\sharp$ for
$T\in\Mat_n(\HH)$, so we first express $q_Z$ in terms of
ordinary complex matrix traces.

We now extend $q_Z$, as a polynomial in the real coordinates of $X$,
to $X_{\C}$. Under the coordinate-ring identifications above, its
image is denoted by $q_Z(M,Y)$. Since $Y$ represents the
complex-linear extension of the adjoint, this image is given by
\[
 q_Z(M,Y)=
 \begin{cases}
 \tr(Z_lMY)-\tr(Z_rYM),&\DD=\R,\C,\\
 \tfrac12\bigl(\tr(Z_lMY)-\tr(Z_rYM)\bigr),&\DD=\HH.
 \end{cases}
\]
In the quaternionic case, $Z_l,Z_r$ here denote their images under
$\widetilde\iota$.
Moreover,
\[
 MY=
 \begin{pmatrix}
 ay_a+by_b&ay_c+by_d\\
 cy_a+dy_b&cy_c+dy_d
 \end{pmatrix},
\qquad
 YM=
 \begin{pmatrix}
 y_aa+y_cc&y_ab+y_cd\\
 y_ba+y_dc&y_bb+y_dd
 \end{pmatrix}.
\]
The diagonal blocks $A_l,D_l,A_r,D_r$ of $Z$ range over spaces whose
complex spans are the symmetric, arbitrary, or $\sharp$-symmetric
matrices of the relevant size, according as $\DD=\R,\C,\HH$; the
corresponding diagonal blocks of $MY$ and $YM$ lie in the same spaces,
on which the trace pairing is nondegenerate. Hence, as $Z_l$ and $Z_r$
vary, the forms $q_Z$ span the same space as the entries of the four
diagonal blocks above, giving \eqref{eq:moment}.
\end{proof}

The stability argument in Section~\ref{sec:Gaussian} now shows that
$J$ is $K$- and $s^*$-stable. Since $K$ is Zariski dense in $\cK$,
$J$ is also $\cK$-stable. Thus
\[
 R=\cA/J
\]
carries the induced $\cK$-action and involution $s^*$, where $s$ is
extended complex-linearly to $X_{\C}$.

\begin{theorem}\label{thm:quotient-symmetry}
For every normalized character $\eta_0$ in \eqref{eq:normalized},
the involution $s^*$ acts identically on
\[
 (\cA/J)^{K,\chi_{\eta_0}^{-1}|_K}.
\]
\end{theorem}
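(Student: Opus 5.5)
The plan is to compute the $\chi_{\eta_0}^{-1}$-isotypic part of $R=\cA/J$ under $\cK$, and to show that it is spanned by classes of explicit polynomials fixed by $s^*$. Since $K$ is Zariski dense in $\cK$ and $\chi_{\eta_0}|_K$ extends to an algebraic character of $\cK$, the invariants $(\cA/J)^{K,\chi_{\eta_0}^{-1}|_K}$ coincide with $\cK$-semi-invariants of $R$ of a fixed weight.

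The weight is restricted as follows. For $\DD=\HH$ it is trivial, since $\Sp$ has no nontrivial characters. For $\DD=\R$ it is a product of $\det$-signs on the four $\OO$ factors, determined by $\eps$. For $\DD=\C$ it is $(\det h_{l,A})^{m}(\det h_{r,A})^{-m}$ up to inversion.

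Because $\cK$ is reductive, taking $\cK$-semi-invariants is exact, so
\[
 (\cA/J)^{\cK,\psi}=\cA^{\cK,\psi}/J^{\cK,\psi}.
\]
It therefore suffices to find generators of $\cA^{\cK,\psi}$ and check $s^*$-invariance modulo $J$.

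The first step is to interpret $(M,Y)$ as a representation of a quiver with two vertices (dimensions $p$ and $q$ on each side, four vertices in total) and arrows $a,b,c,d,y_a,y_b,y_c,y_d$. The group $\cK$ acts by changes of basis at the vertices. For $\DD=\R$ and $\DD=\HH$ this carries an orthogonal or symplectic structure, so one is dealing with orthogonal or symplectic quiver representations.

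By first fundamental theorem type results (Le Bruyn--Procesi for $\GL$, and Zubkov and Lopatin for $\OO,\Sp$), invariants are generated by traces of oriented cycles. Semi-invariants of $\det$-weight are generated by determinants of path matrices for $\GL$, and by Pfaffians or determinants of mixed maps for $\OO$. Thus $\cA^{\cK,\psi}$ is spanned by products of cycle traces $\tr(w)$ and, for nontrivial $\psi$, determinants $\det(w)$ or Pfaffians of block path matrices from the $A$-vertices to themselves, or between the $A$-vertices on the two sides.

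The second step is to compute $s^*$ on these generators. Here $s^*$ exchanges $M\leftrightarrow Y$ with reversal: $a\leftrightarrow y_a$, $b\leftrightarrow y_c$, $c\leftrightarrow y_b$, $d\leftrightarrow y_d$, accompanied by transposition or $\sharp$. Hence $s^*\tr(w)=\tr(\bar w)$, where $\bar w$ is the path read backwards with $M$-arrows and $Y$-arrows swapped. Similarly $s^*\det(w)=\det(\bar w)$, possibly up to a sign that must be tracked.

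The third step, which I expect to be the main obstacle, is to prove the path identities $\tr(w)\equiv\tr(\bar w)$ and $\det(w)\equiv\det(\bar w)$ (and their Pfaffian analogues) modulo $J$. The relations in \eqref{eq:moment} say that $ay_a\equiv-by_b$, $y_aa\equiv-y_cc$, and so on. These allow one to rewrite any cycle by rotating and commuting "moment map" pairs, in the style of the preprojective algebra.

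I would first try to show that, modulo $J$ and cyclic rotation, every cycle is equivalent up to sign to its reversal-swap. The approach is induction on length: use the relations to move a subword $xy_x$ to $-x'y_{x'}$ and reduce to alternating words in $MY$ and $YM$ restricted to blocks. For the determinant classes, one needs multiplicativity together with a Cauchy--Binet-type expansion, and also $\det(ay_a)=\det(by_b)\cdot(-1)^p$, with signs compatible with the $\eps$ or $m$ bookkeeping.

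Tracking these signs, particularly for the Pfaffian generators in the quaternionic case and for the $\sgn$ twist in the real case, is where I expect difficulty. If full generation results are unavailable, I would fall back on a degree-by-degree argument, showing that $\cA^{\cK,\psi}$ is spanned by products of $\cK$-semi-invariants that are manifestly $s^*$-symmetric modulo $J$.
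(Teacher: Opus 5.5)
Your overall architecture matches the paper's: pass to $\cK$-semi-invariants of $R=\cA/J$, take spanning sets from the first fundamental theorems for quiver invariants and semi-invariants, and compare $s^*$ of each generator with the generator modulo $J$. Your rewriting idea for cycles also works once it is organized properly. The relations generating $J$ say exactly that the signed four-cycle operators $P$ (arrows $y_a,c,y_d,b$) and $Q$ (arrows $-a,y_c,-d,y_b$) commute in $R$, so every path is $(P^\alpha Q^\beta)|_{V_{i,R}}$. This settles the trace generators. It also gives $s^*F\equiv F^t$ for every combination $F$ of paths $V_1\to V_0$, and hence handles the determinant generators for $|m|=1$.

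The gap is in the remaining nontrivial weights. For $\DD=\C$ and $|m|\ge2$, the $\lambda_m$-semi-invariants are coefficients of block determinants $\det(F_{ij})_{1\le i,j\le|m|}$ of arrays of path combinations $V_1\to V_0$. Blockwise $s^*F_{ij}\equiv F_{ij}^t$ only yields $s^*\det(F_{ij})=\det(F_{ij}^t)=\det(F_{ji})$: each block is transposed, but the block pattern is not. Such partial transposition does not preserve determinants in general. For example, take a $2\times2$ array of $2\times2$ matrix units $F_{ij}=E_{ij}$; one side is $0$ and the other is $\pm1$. Multiplicativity or Cauchy--Binet cannot bridge this, because the blocks are sums of paths and nothing factors.

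Similarly, in the real case with $\eps=1$, $s^*$ moves the skew block at vertex $0$ to vertex $1$ and vice versa. So $s^*\Pf\bigl(\begin{smallmatrix}U_0&F\\-F^t&U_1\end{smallmatrix}\bigr)$ is the Pfaffian with the two diagonal coefficients exchanged, and no single-path identity equates it with the original. In the quaternionic case, where you anticipated Pfaffian difficulties, the weight is trivial and only traces occur.

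What you are missing is a structural reduction plus two identities that use it.
\begin{itemize}
\item \emph{Structural reduction.} In $R$, every path $V_1\to V_0$ lies in the span of just two maps, $A=-a$ and $B=by_dc$, over the commutative algebra $\cB$ generated by $PQ$, $P^4$, $Q^4$. The actions of $\cB$ on $V_0$ and $V_1$ are trace-compatible, because closed-path traces are vertex-independent.
\item \emph{Block-determinant identity.} For $F_{ij}=Ac_{ij}+Bd_{ij}$ with $C=(c_{ij})$, $D=(d_{ij})$ over $\cB$, one proves $\det(F_{ij})=\det(F_{ji})$. Deform $(C,D)$ linearly to $(I,\Lambda)$ with $\Lambda=\diag(1,\ldots,|m|)$ over formal power series, and diagonalize $DC^{-1}$ by successive conjugations. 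Conjugating by $S$ changes the block determinant by $\det S_W/\det S_V$, which equals $1$ by trace-compatibility.
\item \emph{Real case.} You need that the skew parts of closed paths are exactly $\Omega_ih(t_i,w_i)$, with $\Omega=u-v$ and $w=u+v$. You also need a Pfaffian exchange identity. It is proved by congruence with $\diag(r_W,r_V^{-1})$, where $r=(kh^{-1})^{1/2}$ after replacing $h,k$ by $1+zh,1+zk$; this matrix again has determinant $1$ by trace-compatibility.
\end{itemize}
Without these ingredients, your third step does not go through for $|m|\ge2$ or for the $\delta$-weight.
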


The proof is given in Section~\ref{subsec:quotient-symmetry-proof}.

\section{Two identities for trace-compatible modules}\label{sec:identities}

Throughout this section, $R$ denotes an arbitrary commutative
$\Q$-algebra. Let $\cB$ be a commutative unital $R$-algebra. Suppose that $\cB$ acts on based free $R$-modules
$V,W$ of the same rank $\ell$. For $c\in\cB$, write $c_V,c_W$ for
its actions. We call the two actions \emph{trace-compatible} if
\begin{equation}\label{eq:trace-compatible}
 \tr_V(c_V)=\tr_W(c_W)\qquad(c\in\cB).
\end{equation}
For $E=(e_{ij})\in\Mat_r(\cB)$, write
\[
 E_V\in\End_R(V^{\oplus r}),\qquad
 E_W\in\End_R(W^{\oplus r})
\]
for the induced endomorphisms. 
We use $E^{\mathsf T}=(e_{ji})$ for the transposition of $E$ over $\cB$.

\begin{lemma}\label{lem:trace-base}
Assume \eqref{eq:trace-compatible}. The endomorphisms $E_V,E_W$ have
the same characteristic polynomial, as do $E_V,(E^{\mathsf T})_V$.
The same conclusions hold after replacing
$\cB,R,V,W$ by
$\cB[[t]],R[[t]],V[[t]],W[[t]]$, respectively.
\end{lemma}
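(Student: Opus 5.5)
Since $R$ is a $\Q$-algebra, the characteristic polynomial of an endomorphism $F$ of a based free $R$-module of rank $N$ is determined by the power sums $\tr(F^k)$, $k\geq1$. Indeed, Newton's identities express the coefficients of $\det(t-F)$ as universal polynomials with rational coefficients in $\tr(F),\dots,\tr(F^N)$. Equivalently,
\[
 \det(1-tF)=\exp\Bigl(-\sum_{k\geq1}\tfrac{t^k}{k}\tr(F^k)\Bigr)
\]
in $R[[t]]$. So it suffices to show $\tr(E_V^k)=\tr(E_W^k)$ and $\tr(E_V^k)=\tr((E^{\mathsf T})_V^k)$ for all $k\geq1$.

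For the first identity, note that $E\mapsto E_V$ is a ring homomorphism $\Mat_r(\cB)\to\End_R(V^{\oplus r})$, so $(E_V)^k=(E^k)_V$. It therefore suffices to treat $k=1$ for an arbitrary matrix $F=E^k=(f_{ij})\in\Mat_r(\cB)$. Writing $F_V$ in block form with blocks $(f_{ij})_V$, the trace over the concatenated basis is
\[
 \tr F_V=\sum_i\tr_V((f_{ii})_V)=\tr_V\Bigl(\bigl(\textstyle\sum_i f_{ii}\bigr)_V\Bigr),
\]
and \eqref{eq:trace-compatible} gives $\tr F_V=\tr F_W$.

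For the transpose, commutativity of $\cB$ is essential. Over a commutative ring, $(E^{\mathsf T})^k=(E^k)^{\mathsf T}$, and the diagonal entries of a matrix and its transpose coincide. Hence $\tr_V\bigl(\sum_i ((E^k)^{\mathsf T})_{ii}\bigr)=\tr_V\bigl(\sum_i (E^k)_{ii}\bigr)$, which gives $\tr((E^{\mathsf T})_V^k)=\tr(E_V^k)$. This step uses only the diagonal sum $\sum_i (E^k)_{ii}\in\cB$ and does not need $W$.

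For the power-series version, extend the actions $R[[t]]$-linearly. Then $V[[t]]$ and $W[[t]]$ are based free $R[[t]]$-modules of rank $\ell$, and $\cB[[t]]$ is a commutative $R[[t]]$-algebra acting coefficientwise. For $c=\sum c_mt^m$ we have $\tr(c_{V[[t]]})=\sum_m\tr_V((c_m)_V)t^m$, so trace-compatibility persists. Also $R[[t]]$ is still a $\Q$-algebra, so the argument above applies verbatim. The only point needing care, and it is mild, is the Newton-identity step. It relies on division by integers, which is why the $\Q$-algebra hypothesis is imposed. Without it, equality of traces of all powers would not determine the characteristic polynomial.
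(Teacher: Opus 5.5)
Your proposal is correct and follows the paper's argument. Both proofs compare the power traces $\tr(E_V^k)$, which are traces of actions of elements of $\cB$: the paper writes them as cyclic index sums $\sum\tr_V(e_{i_1i_2,V}\cdots e_{i_ki_1,V})$, and you write them as $\tr_V\bigl((\sum_i(E^k)_{ii})_V\bigr)$. Both then use commutativity for the transpose, recover the characteristic polynomial by Newton's identities over a $\Q$-algebra, and pass trace compatibility coefficientwise to $\cB[[t]]$. Your identity $(E^{\mathsf T})^k=(E^k)^{\mathsf T}$ is simply a tidier form of the paper's reversal of the cyclic index sequence.
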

\begin{proof}
For $j\geq1$,
\[
 \tr(E_V^j)=
 \sum_{i_1,\ldots,i_j}
 \tr_V(e_{i_1i_2,V}e_{i_2i_3,V}\cdots e_{i_ji_1,V}).
\]
Each product is the action of an element of $\cB$, so the same sum
on $W$ agrees with it. Reversing the cyclic sequence of indices and
using commutativity gives the corresponding equality for
$(E^{\mathsf T})_V$. Newton's identities recover the characteristic
coefficients from these power traces over a $\Q$-algebra.

For $c(t)=\sum_{n\geq0}c_nt^n\in\cB[[t]]$, its actions on
$V[[t]]$ and $W[[t]]$ are $\sum_{n\geq0}c_{n,V}t^n$ and
$\sum_{n\geq0}c_{n,W}t^n$, whose traces agree coefficientwise by
\eqref{eq:trace-compatible}. So the extended actions are
trace-compatible, and the preceding argument over $R[[t]]$ applies.
\end{proof}

The first identity says that a block determinant built from two
$\cB$-linear maps does not change when the block array is transposed.

\begin{proposition}\label{prop:determinant}
Assume \eqref{eq:trace-compatible}, and let $A,B:V\to W$ be
$\cB$-linear. For $C=(c_{ij}),D=(d_{ij})\in\Mat_r(\cB)$, put
$F_{ij}=Ac_{ij,V}+Bd_{ij,V}$. Then
\begin{equation}\label{eq:block-determinant}
 \det_R(F_{ij})_{i,j=1}^r
 =\det_R(F_{ji})_{i,j=1}^r
 =\det_R(F_{ij}^{\,t})_{i,j=1}^r.
\end{equation}
\end{proposition}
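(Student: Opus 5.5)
The plan is to prove the first equality $\det_R(F_{ij})=\det_R(F_{ji})$ and get the second one essentially for free. Regard the block matrix $(F_{ij})$ as an $R$-linear map $V^{\oplus r}\to W^{\oplus r}$ in the concatenated bases. Its ordinary transpose is the block matrix whose $(i,j)$ block is $F_{ji}^{\,t}$. Applying this to the array $(F_{ij}^{\,t})$ shows that its transpose is $(F_{ji})$, so
\[
 \det_R(F_{ij}^{\,t})=\det_R(F_{ji}).
\]
Hence the second equality in \eqref{eq:block-determinant} follows from the first, and everything reduces to the block-transposition invariance.

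\textbf{Step 1: the case where $A$ is invertible.} Suppose $A$ is an isomorphism $V\to W$. Put $\varphi=A^{-1}B\in\End_R(V)$; it is $\cB$-linear because $A$ and $B$ are. Let $\cB'\subset\End_R(V)$ be the commutative $R$-subalgebra generated by $\varphi$ and the image of $\cB$; it is commutative because $\varphi$ commutes with the $\cB$-action. Then
\[
 (F_{ij})=A^{(r)}\,E_V,\qquad E=C+\varphi D\in\Mat_r(\cB'),
\]
where $A^{(r)}=\diag(A,\ldots,A)$. The block-transposed array is $(F_{ji})=A^{(r)}(E^{\mathsf T})_V$. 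Apply Lemma~\ref{lem:trace-base} to $\cB'$ acting on $V$ and $V$; this pair is trivially trace-compatible. The lemma gives that $E_V$ and $(E^{\mathsf T})_V$ have the same characteristic polynomial, hence the same determinant. Multiplying by $\det A^{(r)}$ gives the identity. This step does not use \eqref{eq:trace-compatible}; the hypothesis must enter only in removing invertibility.

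\textbf{Step 2: removing invertibility (main obstacle).} In general there may be no invertible $\cB$-linear map $V\to W$ at all, so one cannot simply perturb $A$. The route I would try first is to move everything onto a single module. Replace $V,W$ by $U=V\oplus W$ on both sides; this pair is trivially trace-compatible, and $R$ is replaced by $R[[t]]$ or $R((t))$. On $U$, form $\cB$-linear endomorphisms built from $A$ and $B$ placed in the $V\to W$ corner, plus $t$ times a suitable $\cB$-linear invertible correction. The aim is that the resulting block determinant over $R((t))$ equals $\det_R(F_{ij})$ times a factor that is manifestly symmetric under block transposition. Examples of such a factor are a power of $t$, or a determinant of the form $\det(tI-\,\cdot\,)$ of an array with entries in $\cB$ acting on $V$ versus on $W$.

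It is exactly in comparing such auxiliary factors, one computed on $V$ and one on $W$, that trace compatibility \eqref{eq:trace-compatible} should be needed. The $R[[t]]$ clause of Lemma~\ref{lem:trace-base} is what identifies those characteristic polynomials after the extension of scalars. Once the auxiliary factors are shown to agree, one would cancel them in $R((t))$, which is possible because their leading $t$-coefficients are units. Then one reads off the identity at the relevant $t$-coefficient.

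I expect the design of this auxiliary construction to be the genuinely hard part. I do not yet have a specific choice of correction that provably works, so the steps above are the intended approach rather than a verified argument. If the construction above resists, the fallback is a Cauchy--Binet expansion of $(F_{ij})=[A^{(r)}\ B^{(r)}]\circ\binom{C}{D}_V$. One would then match minors on the $V$ side and the $W$ side using trace compatibility through the same Newton-identity mechanism as in Lemma~\ref{lem:trace-base}.
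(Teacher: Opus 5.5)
\textbf{There is a genuine gap: the general case is not proved.} Your reduction of the second equality to the first is correct, and so is Step~1. When $A$ is invertible, $\varphi=A^{-1}B$ commutes with $\cB$, and Lemma~\ref{lem:trace-base} applied on the single module $V$ gives $\det E_V=\det(E^{\mathsf T})_V$. But that case never uses \eqref{eq:trace-compatible}. As you note yourself, in general there is no invertible $\cB$-linear map $V\to W$, so no perturbation of $A$ is available. Step~2 only describes what an argument should accomplish: the $U=V\oplus W$ construction and the Cauchy--Binet fallback are never made concrete.

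\textbf{The missing idea} is to factor on the coefficient side rather than the map side. Write the block matrix as $A^{\oplus r}C_V+B^{\oplus r}D_V$, and for $E\in\Mat_r(\cB[[t]])$ put $\Delta(E)=\det(A^{\oplus r}+B^{\oplus r}E_V)$. For $S\in\GL_r(\cB[[t]])$, $\cB$-linearity of $A,B$ gives $A^{\oplus r}+B^{\oplus r}(SES^{-1})_V=S_W(A^{\oplus r}+B^{\oplus r}E_V)S_V^{-1}$. Trace compatibility enters exactly here, through $\det S_W=\det S_V$ from Lemma~\ref{lem:trace-base}, so $\Delta$ is invariant under conjugation. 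This is the only point where the two module structures are compared.

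To exploit it:
\begin{itemize}
\item Deform $C_t=I_r+t(C-I_r)$, which is invertible over $\cB[[t]]$, and $D_t=\Lambda+t(D-\Lambda)$ with $\Lambda=\diag(1,\dots,r)$.
\item The original array becomes $\Delta(E_t)\det(C_t)_V$ with $E_t=D_tC_t^{-1}=\Lambda+O(t)$.
\item The block-transposed array becomes $\Delta\bigl(C_t^{\mathsf T}E_t^{\mathsf T}(C_t^{\mathsf T})^{-1}\bigr)\det(C_t^{\mathsf T})_V$.
\end{itemize}
A matrix over a commutative ring need not be similar to its transpose. The role of $\Lambda$ is that its diagonal entries have differences invertible in the $\Q$-algebra. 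This lets you diagonalize $E_t$ over $\cB[[t]]$ by successive $t$-adic conjugations, so $E_t$ is similar to $E_t^{\mathsf T}$. Together with $\det(C_t)_V=\det(C_t^{\mathsf T})_V$, both sides agree in $R[[t]]$, hence in $R[t]$, and setting $t=1$ finishes the proof.
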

\begin{proof}
The block matrix $(F_{ij})$ is $A^{\oplus r}C_V+B^{\oplus r}D_V$.
Extend $A,B$ coefficientwise to $V[[t]]\to W[[t]]$.
For $E\in\Mat_r(\cB[[t]])$, define
\[
 \Delta(E)=\det_{R[[t]]}(A^{\oplus r}+B^{\oplus r}E_V).
\]
If $S\in\GL_r(\cB[[t]])$, then, since $A$ and $B$ are $\cB$-linear,
\[
 A^{\oplus r}+B^{\oplus r}(SES^{-1})_V
 =S_W(A^{\oplus r}+B^{\oplus r}E_V)S_V^{-1}.
\]
Lemma~\ref{lem:trace-base} gives $\det S_W=\det S_V$, whence
\begin{equation}\label{eq:similarity}
 \Delta(SES^{-1})=\Delta(E).
\end{equation}

Put $\Lambda=\diag(1,\ldots,r)$ and set
\[
 C_t=I_r+t(C-I_r),\qquad
 D_t=\Lambda+t(D-\Lambda),\qquad E_t=D_tC_t^{-1}.
\]
The matrix $C_t$ is invertible over $\cB[[t]]$, and
$E_t=\Lambda+O(t)$. We diagonalize $E_t$ by successive
conjugations over $\cB[[t]]$. Suppose that a conjugate $E$
of $E_t$ has no off-diagonal terms of orders less than $n\geq1$,
and let $M$ be its coefficient of $t^n$. Conjugation in the form
$(I_r+t^nX)^{-1}E(I_r+t^nX)$ changes this coefficient to
$M+\Lambda X-X\Lambda$ and leaves the lower coefficients unchanged.
Taking $X_{ii}=0$ and $X_{ij}=-M_{ij}/(i-j)$ for $i\ne j$ eliminates its
off-diagonal entries. The resulting product of conjugating matrices
converges in the $t$-adic topology to an invertible matrix.
Thus $E_t$ is similar to a diagonal matrix, and hence to
$E_t^{\mathsf T}$.

Since
\[
 D_t^{\mathsf T}(C_t^{\mathsf T})^{-1}
 =C_t^{\mathsf T}E_t^{\mathsf T}(C_t^{\mathsf T})^{-1},
\]
equation~\eqref{eq:similarity} gives
$\Delta(D_t^{\mathsf T}(C_t^{\mathsf T})^{-1})=\Delta(E_t)$.
Factoring the two block matrices gives
\begin{align*}
 \det(A^{\oplus r}(C_t)_V+B^{\oplus r}(D_t)_V)
 &=\Delta(E_t)\det(C_t)_V,\\
 \det(A^{\oplus r}(C_t^{\mathsf T})_V
             +B^{\oplus r}(D_t^{\mathsf T})_V)
 &=\Delta\bigl(D_t^{\mathsf T}(C_t^{\mathsf T})^{-1}\bigr)
                         \det(C_t^{\mathsf T})_V.
\end{align*}
Lemma~\ref{lem:trace-base} gives
$\det(C_t)_V=\det(C_t^{\mathsf T})_V$, so the two left-hand
sides agree. Both belong to $R[t]$, and the inclusion
$R[t]\hookrightarrow R[[t]]$ is injective. Evaluating this
polynomial identity at $t=1$ proves the first equality in
\eqref{eq:block-determinant}. Transposing the entire block matrix
$(F_{ji})$ gives $(F_{ij}^{\,t})$, which proves the second equality.
\end{proof}

For a skew-symmetric matrix $M=(m_{ij})\in\Mat_{2r}(R)$, define
\[
 \Pf(M)=\frac{1}{2^r r!}
 \sum_{\sigma\in S_{2r}}\sgn(\sigma)
 \prod_{a=1}^{r}m_{\sigma(2a-1),\sigma(2a)}\in R,
\]
where $S_{2r}$ is the symmetric group.
This normalization gives
$\Pf\bigl(\begin{smallmatrix}0&a\\-a&0\end{smallmatrix}\bigr)=a$.
The Pfaffian is homogeneous of degree $r$ in the entries of $M$
and satisfies, for every $S\in\Mat_{2r}(R)$, the congruence identity
\begin{equation}\label{eq:Pf-congruence}
 \Pf(SMS^t)=\det(S)\Pf(M).
\end{equation}
The second identity exchanges the two diagonal coefficients of a block
Pfaffian.

\begin{proposition}\label{prop:Pfaffian}
Let a commutative $R$-algebra $\cB$ act trace-compatibly on two based
free rank-$r$ modules $W,V$, and suppose that its action matrices
are symmetric. Let $C_W,C_V$ be skew-symmetric endomorphisms
commuting with the respective $\cB$-actions, and let $F:V\to W$
be $\cB$-linear. Then, for all $h,k\in\cB$,
\begin{equation}\label{eq:Pfaffian-exchange}
 \Pf\begin{pmatrix}C_Wh_W&F\\-F^t&C_Vk_V\end{pmatrix}
 =\Pf\begin{pmatrix}C_Wk_W&F\\-F^t&C_Vh_V\end{pmatrix}.
\end{equation}
\end{proposition}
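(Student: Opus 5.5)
The plan is to compare squares first, then fix the sign by a specialization argument. Write
\[
 G(h,k)=\begin{pmatrix}C_Wh_W&F\\-F^t&C_Vk_V\end{pmatrix}.
\]
First I check that $G(h,k)$ is skew-symmetric. Since $h_W$ is symmetric and commutes with $C_W$, we get $(C_Wh_W)^t=h_WC_W^t=-C_Wh_W$. The same holds for $C_Vk_V$, so both Pfaffians in \eqref{eq:Pfaffian-exchange} are defined. I also record the relations I will need. Transposing $Fh_V=h_WF$ and using symmetry gives $F^th_W=h_VF^t$. Lemma~\ref{lem:trace-base} with $r=1$ gives $\det h_W=\det h_V$ for every $h\in\cB$, and the same over $\cB[[t]]$.

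To be allowed to invert, I pass to a universal situation. Both sides are polynomials in the entries of $C_W,C_V,F$ and of the action matrices. The hypotheses ($C$'s skew and commuting with $\cB$, $F$ being $\cB$-linear, trace-compatibility) are polynomial relations in these entries. I would therefore replace $h,k$ by $1+t(h-1)$ and $1+t(k-1)$ over $\cB[[t]]$, which makes $h_W,k_V$ invertible. To make $C_W$ invertible, I would adjoin a formal deformation $C_W+u\,\mathsf J'$ with $\mathsf J'$ skew, invertible and commuting with $\cB$. This is the delicate point: such a commuting perturbation may not exist in general. The fallback is to localize at $\Pf(C_W)$, viewing the identity as one between polynomials over a universal ring on which $\Pf(C_W)$ is a nonzerodivisor. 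Both identities are polynomial in $t$ (and $u$), so they descend to $t=1$ (and $u=0$).

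In the invertible situation I use the Schur-complement formula for Pfaffians, which follows from \eqref{eq:Pf-congruence} with a unipotent $S$:
\[
 \Pf G(h,k)=\Pf(C_Wh_W)\,\Pf\bigl(C_Vk_V-F^t(C_Wh_W)^{-1}F\bigr).
\]
Since $h^{-1}$ commutes past $F$, the complement equals $C_Vk_V-F^tC_W^{-1}F\,h_V^{-1}$, which I write as $\bigl(C_Vk_Vh_V-F^tC_W^{-1}F\bigr)h_V^{-1}$. The factor $Q:=F^tC_W^{-1}F$ is skew, commutes with $\cB$ on $V$, and is independent of $h,k$. For skew $N$ commuting with a symmetric invertible $s_V$, I want the congruence-style identity $\Pf(Ns_V^{-1})\Pf(C_Ws_W)=\Pf(N)\Pf(C_W)$ up to the sign relation $\Pf(C_Ws_W)^2=\det C_W\det s_W$. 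Taking $s=h$ then gives
\[
 \Pf G(h,k)=\Pf(C_W)\,\Pf\bigl(C_V\,hk-Q\bigr)\cdot\epsilon(h),
\]
for a sign-type factor $\epsilon$ built from $\det h_W/\det h_V=1$. The middle factor is visibly symmetric in $h,k$, which gives \eqref{eq:Pfaffian-exchange}.

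The main obstacle is making $\epsilon(h)$ precise, since Pfaffians only determine $\sqrt{\det}$. I would handle it in two steps. First, the squared identity $\det G(h,k)=\det G(k,h)$ follows rigorously from the Schur computation together with $\det h_W=\det h_V$. Second, over the universal integral domain the two Pfaffians are polynomials whose squares agree, so they differ by a global sign $\pm1$. At $h=k$ the two sides are literally equal, so the sign is $+1$. This fixes the sign without computing $\epsilon$.
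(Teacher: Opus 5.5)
Your argument breaks down at the reduction to invertible $C_W$, and the Schur-complement computation cannot proceed without it. When $r$ is odd, $\det C_W=\det C_W^t=(-1)^r\det C_W$ forces $\det C_W=0$ over any $\Q$-algebra. So $C_Wh_W$ is never invertible, $\Pf(C_W)$ is not even defined, and localizing at $\det C_W$ gives the zero ring. This case matters: the proposition is applied with $r=p$ in the real case, and $p$ is arbitrary.

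For even $r$ the reduction is still unjustified. At any point where $h_W$ has simple spectrum, its commutant consists of polynomials in $h_W$, which are symmetric. The hypothesis that $C_W$ is skew and commutes with $h_W$ therefore forces $C_W=0$ on that nonempty open locus, while $\Pf(C_W)\neq0$ at other points (e.g.\ $h=k=1$). Hence the "universal ring" cut out by your hypotheses is not a domain. Inverting $\Pf(C_W)$ discards a whole component, so identities proved there do not descend. Your closing argument (the squares agree, so the sign is global and fixed at $h=k$) also needs exactly this missing integral-domain property. In addition, the factor $\epsilon(h)$ is never defined, and the Schur complement should be $C_Vk_V+F^t(C_Wh_W)^{-1}F$.

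The missing idea is to exchange $h$ and $k$ by a single determinant-one congruence, not by a Schur complement. Pass to $h_z=1+zh$ and $k_z=1+zk$ over $\cB[[z]]$, and put $r=(k_zh_z^{-1})^{1/2}\in\cB[[z]]$ (binomial series). Its action matrices $r_W,r_V$ are symmetric. The matrix $S=\diag(r_W,r_V^{-1})$ has $\det S=1$ by trace compatibility, via $\det=\exp\tr\log$. The relations you already recorded, namely $r_WF=Fr_V$, $F^tr_W=r_VF^t$, and commutation with $C_W,C_V$, give
\[
 S\begin{pmatrix}C_Wh_{z,W}&zF\\-zF^t&C_Vk_{z,V}\end{pmatrix}S^t
 =\begin{pmatrix}C_Wk_{z,W}&zF\\-zF^t&C_Vh_{z,V}\end{pmatrix}.
\]
Then \eqref{eq:Pf-congruence} gives equality of the two Pfaffians exactly, with no sign ambiguity and no inversion of $C_W$. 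By homogeneity, comparing the coefficients of $z^r$ yields \eqref{eq:Pfaffian-exchange}. This is the paper's argument.
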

\begin{proof}
Conjugating by $\diag(r_W,r_V^{-1})$ with $r=(kh^{-1})^{1/2}$
exchanges $h$ and $k$ in the diagonal blocks, and trace compatibility
makes the determinant of this matrix equal to one. To make $h$ and $k$
invertible and the square root available, we replace them by $1+zh$
and $1+zk$ over formal power series in $z$ and compare top coefficients.

Extend $C_W,C_V,F$ coefficientwise to the corresponding
maps on $W[[z]]$ and $V[[z]]$.
In $\cB[[z]]$, put
\[
 h_z=1+zh,\qquad k_z=1+zk,\qquad
 r(z)=(k_zh_z^{-1})^{1/2},
\]
where the square root is the binomial series with constant term one.
Let $r_W,r_V$ denote the actions of $r(z)$ on $W[[z]]$
and $V[[z]]$, respectively.
The action matrices $r_W,r_V$ are symmetric. Trace compatibility
and the formal identity $\det r=\exp(\tr\log r)$ give
$\det r_W=\det r_V$.
Consequently
\[
 S=\diag(r_W,r_V^{-1})\quad\text{satisfies}\quad\det S=1.
\]
Let
\[
 M_z=\begin{pmatrix}
 C_Wh_{z,W}&zF\\-zF^t&C_Vk_{z,V}
 \end{pmatrix}
 =M_0+zM_1,
\]
where
\[
 M_0=\begin{pmatrix}C_W&0\\0&C_V\end{pmatrix},
 \qquad
 M_1=\begin{pmatrix}C_Wh_W&F\\-F^t&C_Vk_V\end{pmatrix}.
\]
The relation $r_WF=Fr_V$, the symmetry of $r_W,r_V$, and the
commutation hypotheses give
\[
 SM_zS^t=
 \begin{pmatrix}
 C_Wk_{z,W}&zF\\-zF^t&C_Vh_{z,V}
 \end{pmatrix}
 =M_0+zM_1',
 \qquad
 M_1'=\begin{pmatrix}C_Wk_W&F\\-F^t&C_Vh_V\end{pmatrix}.
\]
Since $\det S=1$, the congruence identity \eqref{eq:Pf-congruence} gives
\[
 \Pf(M_0+zM_1)=\Pf(M_0+zM_1').
\]
Both sides are polynomials in $z$ of degree at most $r$. By homogeneity
of the Pfaffian, their coefficients of $z^r$ are
$\Pf(M_1)$ and $\Pf(M_1')$, respectively.
Hence $\Pf(M_1)=\Pf(M_1')$, which is \eqref{eq:Pfaffian-exchange}.
\end{proof}

\section{The four-cycle quotient}\label{sec:fourcycle}

Return to the matrix coordinate ring $\cA=\C[X_{\C}]$ and the
Gaussian quotient $R=\cA/J$ of Section~\ref{sec:matrix-model}.

\subsection{The four-cycle model}
Let $V_0,V_1,V_2,V_3$ be complex vector spaces with fixed ordered
bases and dimensions $(p,p,q,q)$ for $\DD=\R,\C$, and
$(2p,2p,2q,2q)$ for $\DD=\HH$. For $B=\cA$ or $R$, put
\[
 V_{i,B}=V_i\otimes_{\C}B,\qquad
 U_B=\bigoplus_{i=0}^3V_{i,B}.
\]
All vertex indices are taken modulo four. The matrix coordinates
in \eqref{eq:blocks} define $\cA$-linear maps
\[
 M:V_{1,\cA}\oplus V_{3,\cA}\longrightarrow
       V_{0,\cA}\oplus V_{2,\cA},\qquad
 Y:V_{0,\cA}\oplus V_{2,\cA}\longrightarrow
       V_{1,\cA}\oplus V_{3,\cA},
\]
whose blocks have the following sources and targets:
\[
\begin{array}{llll}
 a:V_{1,\cA}\to V_{0,\cA},&b:V_{3,\cA}\to V_{0,\cA},&
 c:V_{1,\cA}\to V_{2,\cA},&d:V_{3,\cA}\to V_{2,\cA},\\
 y_a:V_{0,\cA}\to V_{1,\cA},&y_b:V_{0,\cA}\to V_{3,\cA},&
 y_c:V_{2,\cA}\to V_{1,\cA},&y_d:V_{2,\cA}\to V_{3,\cA}.
\end{array}
\]
Their entries are elements of $\cA$. For example, if
$a=(a_{\mu\nu})$, its action is
\[
 a(e_{1,\nu}\otimes r)
   =\sum_\mu e_{0,\mu}\otimes a_{\mu\nu}r
   \qquad(r\in\cA).
\]
Reduction modulo $J$ replaces each coefficient $f\in\cA$ by its
class $[f]\in R$ and gives the corresponding $R$-linear maps.
We use the same block letters after this reduction, specifying
the coefficient ring when needed. As in Section~\ref{sec:matrix-model},
$M,Y$ are independent only for $\DD=\C$; for $\DD=\R,\HH$,
they satisfy $Y=M^t$ and $Y=M^\sharp$, respectively.

Use the standard symmetric bilinear forms on the vertex spaces
for $\DD=\R,\C$. For $\DD=\HH$, use the standard symplectic
forms with matrix $\mathsf J_p$ on $V_0,V_1$ and $\mathsf J_q$
on $V_2,V_3$. Extend these forms $B$-bilinearly to $V_{i,B}$;
direct sums carry the orthogonal direct-sum forms.
For a map $F:V_{i,B}\to V_{j,B}$, write
$F^\diamond:V_{j,B}\to V_{i,B}$ for its adjoint: its matrix is
$F^t$ for $\DD=\R,\C$ and is given by \eqref{eq:sharp} for
$\DD=\HH$.

Set $\tau(i)=1-i\pmod4$, and let
\[
 j_i:V_i\longrightarrow V_{\tau(i)},\qquad
 e_{i,k}\longmapsto e_{\tau(i),k}.
\]
Then $j_{\tau(i)}=j_i^{-1}$. The paired spaces have identical form
matrices, so $j_i$ preserves their forms. Let $E$ be the automorphism
of $\bigoplus_iV_i$ whose restriction to $V_i$ is $j_i$, and use
the same notation after extension to $B$. In particular, $E^2=1$.

For $T\in\End_B(U_B)$, its adjoint
$T^\diamond:U_B\to U_B$ is taken with respect to the direct-sum
form. Explicitly,
\[
 T^\diamond=
 \begin{cases}
 T^t,&\DD=\R,\C,\\
 \mathsf J_{\mathrm{tot}}^{-1}T^t\mathsf J_{\mathrm{tot}},
      &\DD=\HH,
 \end{cases}
 \qquad
 \mathsf J_{\mathrm{tot}}
 =\diag(\mathsf J_p,\mathsf J_p,\mathsf J_q,\mathsf J_q).
\]

The involution $s^*f=f\circ s$ of $\cA$ preserves $J$ and induces
$s^*[f]=[s^*f]$ on $R$. On matrices over either ring, define $s^*$
entrywise:
\[
 s^*(f_{\mu\nu})=(s^*f_{\mu\nu}).
\]
That is, $s^*F:V_{i,B}\to V_{j,B}$ has the same source and target as $F$, and
$s^*(rF)=(s^*r)s^*F, r\in\cA$.
In the fixed coordinate bases of Section~\ref{sec:matrix-model},
the formulas $s^*M=M^\diamond$ and $s^*Y=Y^\diamond$ give
\[
\begin{aligned}
 s^*a&=a^\diamond,&s^*b&=c^\diamond,&
 s^*c&=b^\diamond,&s^*d&=d^\diamond,\\
 s^*y_a&=y_a^\diamond,&s^*y_b&=y_c^\diamond,&
 s^*y_c&=y_b^\diamond,&s^*y_d&=y_d^\diamond.
\end{aligned}
\]
These are equalities of coordinate matrices. When the sources and
targets are retained, the basis identifications must be inserted;
for example,
\[
 s^*a=j_1a^\diamond j_1,
 \qquad s^*b=j_1c^\diamond j_3.
\]

Define $\widetilde P,\widetilde Q\in\End_{\cA}(U_{\cA})$
by their nonzero blocks:
\begin{equation}\label{eq:PQ}
 \begin{array}{c|cccc}
 i&0&1&2&3\\\hline
 \widetilde P_i:V_{i,\cA}\to V_{i+1,\cA}&y_a&c&y_d&b\\
 \widetilde Q_i:V_{i+1,\cA}\to V_{i,\cA}&-a&y_c&-d&y_b.
 \end{array}
\end{equation}
Their commutator is block diagonal, with $i$-th block
$\widetilde P_{i-1}\widetilde Q_{i-1}
 -\widetilde Q_i\widetilde P_i$. Explicitly, over $\cA$,
\begin{equation}\label{eq:PQ-blocks}
 [\widetilde P,\widetilde Q]
 =\diag\bigl(ay_a+by_b,-(y_aa+y_cc),
                 cy_c+dy_d,-(y_bb+y_dd)\bigr).
\end{equation}
By \eqref{eq:moment}, the entries of this matrix generate $J$.
Let $P,Q\in\End_R(U_R)$ be the reductions of
$\widetilde P,\widetilde Q$, and denote their blocks by $P_i,Q_i$.
Then
\begin{equation}\label{eq:PQ-commute}
 PQ=QP\qquad\text{in }\End_R(U_R).
\end{equation}
The block formulas for $s^*$ above give, as endomorphisms of $U_R$,
\begin{equation}\label{eq:sPQ}
 P':=s^*P=EP^\diamond E^{-1},\qquad Q':=s^*Q=EQ^\diamond E^{-1}.
\end{equation}

A path $\gamma:i\to j$ means a composable word in the eight arrows
$\widetilde P_i,\widetilde Q_i$ of \eqref{eq:PQ}, with their signs
included and the rightmost arrow applied first. Its product
defines
\[
 F_{\gamma,\cA}\in
 \Hom_{\cA}(V_{i,\cA},V_{j,\cA}),\qquad
 F_{\gamma,R}\in\Hom_R(V_{i,R},V_{j,R})
\]
before and after reduction. A length-zero path gives the identity.
For a closed path at $i$, its trace is the polynomial
\[
 f_\gamma=\tr_{V_{i,\cA}}(F_{\gamma,\cA})\in\cA,
 \qquad
 [f_\gamma]=\tr_{V_{i,R}}(F_{\gamma,R})\in R.
\]
Since coefficient pullback respects matrix products and traces,
$s^*F_\gamma$ is obtained by replacing every arrow by its pulled-back
matrix, and
\[
 s^*f_\gamma=\tr(s^*F_{\gamma,\cA}),\qquad
 s^*[f_\gamma]=\tr(s^*F_{\gamma,R}).
\]

Suppose that $\gamma$ starts at $i$ and has $\alpha$ steps in
the $P$ direction and $\beta$ in the $Q$ direction. By
\eqref{eq:PQ-commute},
\[
 F_{\gamma,R}
 =(P^\alpha Q^\beta)|_{V_{i,R}}
 :V_{i,R}\longrightarrow V_{i+\alpha-\beta,R}.
\]
In particular, the path is closed exactly when
$\alpha-\beta\equiv0\pmod4$.

\subsection{Traces and paths}

\begin{lemma}\label{lem:closed-traces}
Let $\alpha,\beta$ be nonnegative integers with $\alpha+\beta>0$
and $\alpha-\beta\equiv0\pmod4$. Then
\[
 \tr_{V_{i,R}}\bigl((P^\alpha Q^\beta)|_{V_{i,R}}\bigr)
\]
is independent of $i$. Every positive-length closed-path trace in
$R$ is one of these traces.
\end{lemma}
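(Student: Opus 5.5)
The plan is to use the commutation \eqref{eq:PQ-commute} together with cyclicity of the trace. Fix $\alpha,\beta$ with $\alpha+\beta>0$ and $\alpha\equiv\beta\pmod4$. The operator $P^\alpha Q^\beta$ on $U_R$ is then block diagonal, because each application of $P$ raises the vertex index by one, each application of $Q$ lowers it by one, and the net shift is $0\pmod 4$. Write $T_i=\tr_{V_{i,R}}((P^\alpha Q^\beta)|_{V_{i,R}})$. It suffices to show $T_i=T_{i+1}$ for all $i$.

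Assume first $\alpha\geq1$. Since $P$ and $Q$ commute over $R$, we have $P^\alpha Q^\beta=P\cdot(P^{\alpha-1}Q^\beta)$. Restricted to $V_{i,R}$, this is $P_{i}'\circ X_i$, where $X_i=(P^{\alpha-1}Q^\beta)|_{V_{i,R}}:V_{i,R}\to V_{i-1,R}$. Here $P'_i$ denotes the block $P_{i-1}:V_{i-1,R}\to V_{i,R}$. Because $P$ and $Q$ commute, we can also write $P^\alpha Q^\beta=(P^{\alpha-1}Q^\beta)\cdot P$. Restricted to $V_{i-1,R}$, this is $X_i\circ P_{i-1}$; this uses that $P^{\alpha-1}Q^\beta$ maps $V_{i,R}$ to $V_{i-1,R}$ and agrees with $X_i$ there.

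For $R$-linear maps $F:V\to W$ and $G:W\to V$ between free modules, $\tr(FG)=\tr(GF)$. Applying this gives $T_i=\tr(P_{i-1}X_i)=\tr(X_iP_{i-1})=T_{i-1}$. If instead $\alpha=0$, then $\beta\geq1$, and the same argument with $Q$ in place of $P$ gives the result. Hence $T_i$ is independent of $i$.

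For the second claim, let $\gamma$ be a closed path of positive length at $i$, with $\alpha$ steps in the $P$ direction and $\beta$ in the $Q$ direction. By the displayed identity preceding the lemma, $F_{\gamma,R}=(P^\alpha Q^\beta)|_{V_{i,R}}$. Here the signs are already built into the arrows, so no extra sign appears. That identity is obtained by reordering the factors of $F_{\gamma,R}$ inside $\End_R(U_R)$. This is legitimate because each arrow is the restriction of $P$ or $Q$ to the appropriate summand, so the composite equals the corresponding word in $P$ and $Q$ restricted to $V_{i,R}$, and that word can be rearranged by commutativity. Closedness forces $\alpha\equiv\beta\pmod4$, and positive length means $\alpha+\beta>0$. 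Therefore $[f_\gamma]=\tr F_{\gamma,R}$ is one of the traces $T_i$.

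The only point requiring care is the bookkeeping of blocks, namely that each restriction lands in the claimed vertex space. This is immediate from the shift-by-one structure of $P$ and $Q$, so I expect no genuine obstacle.
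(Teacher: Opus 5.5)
Your proposal is correct and follows essentially the same argument as the paper. You split off one $P$-arrow (or a $Q$-arrow when $\alpha=0$), use $PQ=QP$ to move it to the other side, and apply the rectangular trace identity $\operatorname{tr}(AB)=\operatorname{tr}(BA)$ to compare the traces at adjacent vertices. The second claim then follows directly from the path formula stated just before the lemma, as in the paper.
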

\begin{proof}
Suppose first that $\alpha>0$. Put
\[
 A=P_{i-1}:V_{i-1,R}\longrightarrow V_{i,R},\qquad
 B=(P^{\alpha-1}Q^\beta)|_{V_{i,R}}
       :V_{i,R}\longrightarrow V_{i-1,R}.
\]
Then $AB=(P^\alpha Q^\beta)|_{V_{i,R}}$, whereas
$BA=(P^\alpha Q^\beta)|_{V_{i-1,R}}$ by
\eqref{eq:PQ-commute}. The rectangular trace identity
\[
 \tr_{V_{i,R}}(AB)=\tr_{V_{i-1,R}}(BA)
\]
therefore identifies the traces at consecutive vertices. Iteration
reaches all four vertices. If $\alpha=0$ and $\beta>0$, factor off
the leftmost $Q$-arrow instead; this compares the traces at $i$ and
$i+1$. The last assertion follows from the path formula above.
\end{proof}

\begin{corollary}\label{cor:trace-symmetry}
For every closed path $\gamma$, the involution on $R$ satisfies
$s^*[f_\gamma]=[f_\gamma]$.
\end{corollary}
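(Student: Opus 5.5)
The plan is to compute $s^*[f_\gamma]$ directly from \eqref{eq:sPQ}. The three inputs are the commutation relation \eqref{eq:PQ-commute}, the fact that the adjoint $(\cdot)^\diamond$ preserves traces of diagonal blocks, and the vertex-independence in Lemma~\ref{lem:closed-traces}.

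If $\gamma$ has length zero, then $[f_\gamma]=\dim V_i$ is a constant, which $s^*$ fixes. Now suppose $\gamma$ has positive length, starts at $i$, and has $\alpha$ steps in the $P$ direction and $\beta$ in the $Q$ direction. By the path formula, $[f_\gamma]$ is the trace of the $(i,i)$ block of $T=P^\alpha Q^\beta\in\End_R(U_R)$. The map $s^*$ is a ring automorphism of $R$ and acts entrywise on matrices. It therefore commutes with matrix products, with extracting blocks, and with taking traces. Hence $s^*[f_\gamma]$ is the trace of the $(i,i)$ block of $(s^*P)^\alpha(s^*Q)^\beta=P'^\alpha Q'^\beta$.

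Next I apply \eqref{eq:sPQ}:
\[
 P'^\alpha Q'^\beta=E\,(P^\diamond)^\alpha(Q^\diamond)^\beta E^{-1}
 =E\,(Q^\beta P^\alpha)^\diamond E^{-1}=E\,T^\diamond E^{-1}.
\]
The last equality uses \eqref{eq:PQ-commute}, and the middle one uses that $(\cdot)^\diamond$ is an anti-automorphism of $\End_R(U_R)$. The map $E$ sends $V_{i,R}$ to $V_{\tau(i),R}$ via $j_i$. So the $(i,i)$ block of $ET^\diamond E^{-1}$ equals $j_{\tau(i)}\,(T^\diamond)_{\tau(i)\tau(i)}\,j_{\tau(i)}^{-1}$, and its trace equals $\tr\,(T^\diamond)_{\tau(i)\tau(i)}$.

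The form on $U_R$ is the orthogonal direct sum of the vertex forms. Hence the diagonal block of $T^\diamond$ at vertex $k$ is $(T_{kk})^\diamond$. This is $T_{kk}^t$ for $\DD=\R,\C$, and $\mathsf J_k^{-1}T_{kk}^t\mathsf J_k$ for $\DD=\HH$. In either case it has the same trace as $T_{kk}$. Therefore
\[
 s^*[f_\gamma]=\tr_{V_{\tau(i),R}}\bigl((P^\alpha Q^\beta)|_{V_{\tau(i),R}}\bigr).
\]
By Lemma~\ref{lem:closed-traces}, this trace is independent of the vertex, so it equals the trace at $i$, which is $[f_\gamma]$.

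The only point needing care is the bookkeeping at the start. One must check that entrywise $s^*$ applied to the coordinate matrix of the block of $P^\alpha Q^\beta$ really is the corresponding block of $P'^\alpha Q'^\beta$. This holds because \eqref{eq:sPQ} is an identity of endomorphisms of $U_R$ in which the basis identifications $j_i$ are already absorbed into $E$. Once that is in place, the argument is the short trace computation above.
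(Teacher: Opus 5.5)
Your proposal is correct and follows essentially the same route as the paper's proof. You use \eqref{eq:sPQ}, the anti-multiplicativity of $\diamond$, and $PQ=QP$ to express $s^*F_{\gamma,R}$ as a $j$-conjugate of the adjoint of the closed-path operator at $\tau(i)$, note that adjoints and conjugation preserve trace, and conclude with Lemma~\ref{lem:closed-traces}, treating the length-zero case separately exactly as the paper does.
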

\begin{proof}
For a positive-length closed path starting at $i$, write
$F_{\gamma,R}=(P^\alpha Q^\beta)|_{V_{i,R}}$.
By \eqref{eq:sPQ} and the rule reversing products under adjoints,
\[
 (P')^\alpha(Q')^\beta
 =E(Q^\beta P^\alpha)^\diamond E^{-1}.
\]
Consequently, as endomorphisms of $V_{i,R}$,
\[
 s^*F_{\gamma,R}
 =j_{\tau(i)}
   \bigl((Q^\beta P^\alpha)|_{V_{\tau(i),R}}\bigr)^\diamond j_i.
\]
Adjoints and change of basis preserve trace. Hence
\[
 s^*[f_\gamma]
 =\tr_{V_{\tau(i),R}}
       \bigl((Q^\beta P^\alpha)|_{V_{\tau(i),R}}\bigr)
 =[f_\gamma],
\]
where the last equality uses $PQ=QP$ and
Lemma~\ref{lem:closed-traces}. A length-zero path has constant
trace $\dim_{\C}V_i$, which is also fixed.
\end{proof}

The same calculation applies to a path from $V_{1,R}$ to $V_{0,R}$.
Here $\tau$ exchanges the two endpoints, so reversing the path and
then applying the identifications gives
$s^*F=j_1F^\diamond j_1$. Indeed, for
$F=(P^\alpha Q^\beta)|_{V_{1,R}}$ with
$\alpha-\beta\equiv-1\pmod4$, the adjoint block in
\eqref{eq:sPQ} is the adjoint of
$(Q^\beta P^\alpha)|_{V_{1,R}}$, which equals
$(P^\alpha Q^\beta)|_{V_{1,R}}$ in $R$.
In the paired bases this says
\begin{equation}\label{eq:sF}
 s^*F=F^\diamond\qquad\text{as coordinate matrices over }R.
\end{equation}
It holds also for linear combinations of these paths with
$s^*$-fixed coefficients, since $\diamond$ is linear over $R$.
For paths from $V_{0,R}$ to $V_{1,R}$, the corresponding map
identity is $s^*F=j_0F^\diamond j_0$, with the same coordinate
formula \eqref{eq:sF}.

\begin{lemma}\label{lem:two-paths}
Let $\cB=R[t,u,v]$ be the polynomial algebra in three commuting
indeterminates. For each vertex $i$, define an $R$-algebra action
$\rho_i:\cB\to\End_R(V_{i,R})$ by
\begin{equation}\label{eq:tuv}
 \rho_i(t)=(PQ)|_{V_{i,R}},\qquad
 \rho_i(u)=P^4|_{V_{i,R}},\qquad
 \rho_i(v)=Q^4|_{V_{i,R}}.
\end{equation}
The actions $\rho_0,\rho_1$ are trace-compatible. Every path from
$V_{1,R}$ to $V_{0,R}$ belongs to the $\cB$-span of
\begin{equation}\label{eq:two-paths}
 A=Q|_{V_{1,R}}=-a,\qquad
 B=P^3|_{V_{1,R}}=b\,y_d\,c,
\end{equation}
where $A,B\in\Hom_{\cB}(V_{1,R},V_{0,R})$ and the scalar
action on this Hom-module is
$c\cdot F=F\rho_1(c)=\rho_0(c)F$ for $c\in\cB$.
In the reverse direction, all paths belong to the $\cB$-span of
$P|_{V_{0,R}}$ and $Q^3|_{V_{0,R}}$ in
$\Hom_{\cB}(V_{0,R},V_{1,R})$.
\end{lemma}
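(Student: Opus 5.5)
Three assertions must be checked: the actions are well defined, they are trace-compatible, and the span statements hold. The whole argument rests on the commutation relation \eqref{eq:PQ-commute}.

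\emph{Well-definedness and $\cB$-linearity.} Since $PQ=QP$ in $\End_R(U_R)$, the operators $PQ$, $P^4$ and $Q^4$ commute with each other. Each of them preserves every vertex space $V_{i,R}$, because $PQ$ shifts the vertex index by $1-1=0$, while $P^4$ and $Q^4$ shift it by $\pm4\equiv0$. Hence each $\rho_i$ is an $R$-algebra homomorphism from the commutative polynomial algebra $\cB$.

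Every path map $F=(P^\alpha Q^\beta)|_{V_{1,R}}$ commutes with $PQ$, $P^4$ and $Q^4$ on $U_R$. This gives $\rho_0(c)F=F\rho_1(c)$ for all $c\in\cB$, which is exactly the claimed equality of the two scalar actions on the Hom-module. In particular, $A=Q|_{V_{1,R}}$ and $B=P^3|_{V_{1,R}}$ are $\cB$-linear.

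\emph{Trace compatibility.} It suffices to check \eqref{eq:trace-compatible} on monomials $t^xu^yv^z$. On $V_{i,R}$ such a monomial acts by $(P^{x+4y}Q^{x+4z})|_{V_{i,R}}$. Its exponents satisfy $\alpha-\beta\equiv0\pmod4$.
\begin{itemize}
\item If $\alpha+\beta>0$, Lemma~\ref{lem:closed-traces} says the trace is independent of $i$.
\item For the constant monomial, both traces equal $\dim V_0=\dim V_1$.
\end{itemize}
Since both sides of \eqref{eq:trace-compatible} are $R$-linear in $c$, this gives trace compatibility for all of $\cB$.

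\emph{Span statement.} A path $V_{1,R}\to V_{0,R}$ equals $(P^\alpha Q^\beta)|_{V_{1,R}}$ with $\alpha-\beta\equiv-1\pmod4$. I reduce it using $PQ=\rho(t)$, $P^4=\rho(u)$ and $Q^4=\rho(v)$, all of which commute with everything.
\begin{itemize}
\item If $\beta\ge\alpha$, write $\beta=\alpha+1+4k$. Then the path is $Q^{4k}(PQ)^\alpha Q$, which is $\rho_0(v^kt^\alpha)A$.
\item If $\alpha>\beta$, write $\alpha=\beta+3+4k$. Then the path is $P^{4k}(PQ)^\beta P^3$, which is $\rho_0(u^kt^\beta)B$.
\end{itemize}

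The explicit formulas come from \eqref{eq:PQ}. First, $Q_0=-a$ maps $V_1\to V_0$, so $A=-a$. Second, the composite $P_3P_2P_1$ maps $V_1\to V_2\to V_3\to V_0$ and equals $b\,y_d\,c$, so $B=b\,y_d\,c$.

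The reverse direction $V_{0,R}\to V_{1,R}$ is symmetric. There $\alpha-\beta\equiv1\pmod4$, and the same case split yields $\cB$-multiples of $P|_{V_{0,R}}$ or of $Q^3|_{V_{0,R}}$.

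\emph{Where care is needed.} Everything above is formal once $PQ=QP$ holds in $R$. The one place needing attention is the scalar action on the Hom-module: using either $\rho_0(c)F$ or $F\rho_1(c)$ must give the same result. This was settled in the first step, since every path map commutes with the central operators $PQ$, $P^4$ and $Q^4$ on $U_R$.
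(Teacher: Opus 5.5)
Your proof is correct and follows the paper's argument essentially step for step: commutation of $PQ$, $P^4$, $Q^4$ gives well-definedness and the intertwining property, Lemma~\ref{lem:closed-traces} together with equal ranks gives trace compatibility, and a case split on the sign of $\alpha-\beta$ factors each path through $A$ or $B$. The only cosmetic difference is that you put the $\cB$-multiplier on the left via $\rho_0$ rather than on the right via $\rho_1$; this is equivalent by the intertwining you established.
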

\begin{proof}
The three operators in \eqref{eq:tuv} preserve each vertex and
commute by \eqref{eq:PQ-commute}, so they define the stated actions.
For a nonconstant monomial in $t,u,v$, the two traces agree by
Lemma~\ref{lem:closed-traces}. They also agree on constants in $R$,
since $V_{0,R}$ and $V_{1,R}$ have the same rank. By $R$-linearity,
\[
 \tr_{V_{0,R}}\rho_0(c)=\tr_{V_{1,R}}\rho_1(c)
 \qquad(c\in\cB).
\]
Commutation with $PQ,P^4,Q^4$ shows that every path intertwines
the actions at its endpoints; in particular, $A,B$ are
$\cB$-linear.

A path from $1$ to $0$ has matrix
$(P^\alpha Q^\beta)|_{V_{1,R}}$ with
$\alpha-\beta\equiv-1\pmod4$. Commutativity gives
\[
 (P^\alpha Q^\beta)|_{V_{1,R}}=
 \begin{cases}
 B\rho_1\bigl(t^\beta u^{(\alpha-\beta-3)/4}\bigr),
       &\alpha-\beta\geq3,\\
 A\rho_1\bigl(t^\alpha v^{(\beta-\alpha-1)/4}\bigr),
       &\alpha-\beta\leq-1.
 \end{cases}
\]
All exponents are nonnegative integers. The reverse direction
uses $\alpha-\beta\equiv1\pmod4$ and the generators
$P|_{V_{0,R}},Q^3|_{V_{0,R}}$ in the same way.
\end{proof}

\subsection{Adjoint decomposition of closed-path matrices}
Suppose that $\DD=\R$, so $Y=M^t$.
On $U_R$ introduce the symmetric $R$-bilinear form with matrix
\[
 S=\diag(I_p,-I_p,-I_q,I_q),
\]
and write $T^\dagger=S^{-1}T^tS$ for the adjoint of
$T\in\End_R(U_R)$. The arrow formulas give
\begin{equation}\label{eq:dagger}
 P^\dagger=Q.
\end{equation}
On endomorphisms of a single vertex, $\dagger$ is ordinary
transpose, since the corresponding block of $S$ is $I$ or $-I$.

In the polynomial algebra $\cB$ of Lemma~\ref{lem:two-paths}, put
\begin{equation}\label{eq:wOmega}
 w=u+v,\qquad\Omega=u-v.
\end{equation}
Write $t_i=\rho_i(t)$, $u_i=\rho_i(u)$, $v_i=\rho_i(v)$,
$w_i=\rho_i(w)$, and $\Omega_i=\rho_i(\Omega)$.
Equations~\eqref{eq:PQ-commute} and~\eqref{eq:dagger} imply
\begin{equation}\label{eq:adjoint-relations}
 t_i^t=t_i,\qquad u_i^t=v_i,\qquad u_iv_i=t_i^4
 \qquad\text{in }\End_R(V_{i,R}).
\end{equation}
The last identity is a relation among the images of the formal
variables under $\rho_i$. Thus $t_i,w_i$ are commuting symmetric
endomorphisms, and $\Omega_i$ is skew-symmetric and commutes with
both of them.

\begin{lemma}\label{lem:skew-paths}
In the $\C$-linear span of the closed-path matrices in
$\End_R(V_{i,R})$, the skew-symmetric elements are precisely
\begin{equation}\label{eq:skew-form}
 \Omega_i h(t_i,w_i),\qquad h\in\C[T,W].
\end{equation}
\end{lemma}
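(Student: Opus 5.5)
First I will describe the $\C$-linear span of the closed-path matrices at vertex $i$. A closed path at $i$ reduces in $R$ to $(P^\alpha Q^\beta)|_{V_{i,R}}$ with $\alpha\equiv\beta\pmod 4$. Factor out $\min(\alpha,\beta)$ copies of $PQ$; what remains is a power of $P^4$ or a power of $Q^4$. So the span consists of $t_i^{m}u_i^{k}$ and $t_i^{m}v_i^{k}$ with $m,k\ge0$, i.e. the $\C$-span $\mathcal S_i$ of the images $\rho_i(t^mu^k)$ and $\rho_i(t^mv^k)$. Because $u_iv_i=t_i^4$ by \eqref{eq:adjoint-relations}, $\mathcal S_i$ is closed under multiplication. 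We also have $u_i=(w_i+\Omega_i)/2$ and $v_i=(w_i-\Omega_i)/2$.

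Next I will show that every element of $\mathcal S_i$ can be written as $g(t_i,w_i)+\Omega_i h(t_i,w_i)$. It suffices to rewrite the monomials $t_i^mu_i^k$ and $t_i^mv_i^k$. Set $x=u_i$ and $y=v_i$. They commute, $x+y=w_i$ and $xy=t_i^4$, so
\[
 x^k+y^k\in\C[t_i,w_i],\qquad \frac{x^k-y^k}{x-y}\in\C[t_i,w_i],
\]
as symmetric polynomials in $x,y$. Since $x^k-y^k=\Omega_i\cdot\frac{x^k-y^k}{x-y}$, we get
\[
 x^k=\tfrac12\bigl((x^k+y^k)+\Omega_i\,\tfrac{x^k-y^k}{x-y}\bigr),\qquad
 y^k=\tfrac12\bigl((x^k+y^k)-\Omega_i\,\tfrac{x^k-y^k}{x-y}\bigr).
\]
Multiplying by $t_i^m$ gives the claimed form.

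Conversely, every element $g(t_i,w_i)+\Omega_i h(t_i,w_i)$ lies in $\mathcal S_i$. Indeed $t_i$, $w_i$ and $\Omega_i$ all lie in $\mathcal S_i$, and $\mathcal S_i$ is a commutative algebra. In particular every element $\Omega_i h(t_i,w_i)$ lies in the span.

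Now take transposes. By \eqref{eq:adjoint-relations}, $t_i$ and $w_i$ are symmetric, $\Omega_i$ is skew, and all three commute. Therefore $g(t_i,w_i)$ is symmetric and $\Omega_i h(t_i,w_i)$ is skew, because $(\Omega_i h)^t=h^t\Omega_i^t=-h\Omega_i=-\Omega_i h$. So every element of the span is the sum of a symmetric part $g$ and a skew part $\Omega_i h$.

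If $X=g+\Omega_ih$ is skew, then $X^t=-X$ gives $g-\Omega_ih=-g-\Omega_ih$, hence $2g=0$. Since $R$ is a $\C$-algebra, $g=0$. Thus $X=\Omega_ih(t_i,w_i)$. Together with the previous step, which showed that every such element is skew and lies in the span, this gives the lemma.

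The main point needing care is the first step: that every closed path really reduces to a monomial in $t_i,u_i,v_i$. This uses the path formula $F_{\gamma,R}=(P^\alpha Q^\beta)|_{V_{i,R}}$, which comes from $PQ=QP$ in \eqref{eq:PQ-commute}. Beyond that, the argument only manipulates the operators $t_i,w_i,\Omega_i$. It never needs linear independence of these operators, so no structural knowledge of $R$ is required.
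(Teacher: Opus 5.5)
Your proof is correct and follows essentially the same route as the paper. You reduce closed paths to the monomials $t_i^a u_i^b$ and $t_i^a v_i^b$, factor $u_i^b\pm v_i^b$ through symmetric functions of $w_i$ and $u_iv_i=t_i^4$, and isolate the skew part; the only cosmetic difference is that you write the span as $\C[t_i,w_i]+\Omega_i\,\C[t_i,w_i]$ and remove the symmetric component via $2g=0$, whereas the paper applies $F\mapsto(F-F^t)/2$ monomial by monomial.
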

\begin{proof}
Every closed-path matrix is a monomial
$t_i^a u_i^b$ or $t_i^a v_i^b$, with $a,b\geq0$.
By \eqref{eq:adjoint-relations}, their transposes are interchanged.
For $b>0$,
\[
 t_i^a(u_i^b-v_i^b)
 =\Omega_i t_i^a
   \bigl(u_i^{b-1}+u_i^{b-2}v_i+\cdots+v_i^{b-1}\bigr).
\]
The factor in parentheses is symmetric in the commuting operators
$u_i,v_i$, so it is a polynomial in $w_i=u_i+v_i$ and
$u_iv_i=t_i^4$. For $b=0$, the skew part is zero. Applying
$F\mapsto(F-F^t)/2$ to a complex linear combination of closed
paths therefore gives \eqref{eq:skew-form}; every skew-symmetric
element equals its own skew part. Conversely,
$\Omega_i h(t_i,w_i)$ is skew-symmetric and expands into a complex
linear combination of closed paths.
\end{proof}

\section{Semi-invariants and their symmetry}\label{sec:semi-invariants}

In vertex order, write $g\in\cK$ from \eqref{eq:complexified-compact}
as $g=(g_0,g_1,g_2,g_3)$,
where $g_0,g_1\in\cK_{\DD}(p)$ and
$g_2,g_3\in\cK_{\DD}(q)$. The vertex spaces of
Section~\ref{sec:fourcycle} are distinct copies of the corresponding
standard complex representations, with the following roles:
\[
\begin{array}{c|cccc}
 \text{vertex space}&V_0&V_1&V_2&V_3\\\hline
 \text{matrix component}&\text{left }p&\text{right }p&
                          \text{left }q&\text{right }q\\
 \text{acting factor}&g_0&g_1&g_2&g_3
\end{array}
\]
Their dimensions are $(p,p,q,q)$ for $\DD=\R,\C$ and
$(2p,2p,2q,2q)$ for $\DD=\HH$. Setting
$h_l=\diag(g_0,g_2)$ and $h_r=\diag(g_1,g_3)$, the point action is
\[
 M(gx)=h_lM(x)h_r^{-1},\qquad
 Y(gx)=h_rY(x)h_l^{-1}\qquad(x\in X_{\C}).
\]
Consequently, for any path $\gamma:i\to j$, evaluation of the
polynomial path matrix $F_{\gamma,\cA}$ gives
\[
 F_\gamma(x)\in\Hom_{\C}(V_i,V_j),\qquad
 F_\gamma(gx)=g_jF_\gamma(x)g_i^{-1}.
\]
For a closed path at $i$, the polynomial
$f_\gamma(x)=\tr_{V_i}(F_\gamma(x))$ is therefore
$\cK$-invariant.

For a character $\psi$ of $\cK$, write
\[
 \cA^{\cK,\psi}
 =\{f\in\cA:f(gx)=\psi(g)f(x)\text{ for all }g\in\cK\},
\]
and use the same point-weight convention for $R=\cA/J$.
The group $K$ is Zariski dense in $\cK$, so these polynomial
weight spaces can equally be computed using $K$. In particular, the
averaging argument in the proof of Lemma~\ref{lem:quotient-surjection}
below gives, for every algebraic character $\psi$ of $\cK$,
\begin{equation}\label{eq:weight-identification}
 (\cA/J)^{K,\psi|_K}=R^{\cK,\psi}.
\end{equation}

\begin{lemma}\label{lem:quotient-surjection}
For every algebraic character $\psi$ of $\cK$, the quotient map
induces a surjection
\begin{equation}\label{eq:quotient-surjection}
 \cA^{\cK,\psi}\longrightarrow R^{\cK,\psi}.
\end{equation}
\end{lemma}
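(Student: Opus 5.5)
The surjectivity will come from an averaging (Reynolds) operator over the compact group $K$. The key points are that $J$ is $\cK$-stable and that $\cA$ is a locally finite, rational $\cK$-module.

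First fix the setup. Let $\psi$ be an algebraic character of $\cK$, and let $[f]\in R^{\cK,\psi}$ with $f\in\cA$. The $\cK$-action on $\cA$ preserves the degree filtration, so $f$ lies in the finite-dimensional $\cK$-stable subspace $\cA_{\le d}$, where $d=\deg f$. Its image is $R_{\le d}=\cA_{\le d}/(J\cap\cA_{\le d})$, and the quotient map $\cA_{\le d}\to R_{\le d}$ is $\cK$-equivariant, since $J$ is $\cK$-stable.

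Next define the averaging operator. With Haar measure on $K$ of total mass one, put
\[
 \mathsf p_\psi f=\int_K\psi(k)^{-1}\,(k^{-1}\cdot f)\,dk,
 \qquad (k^{-1}\cdot f)(x)=f(kx).
\]
This is an integral of a continuous function with values in the finite-dimensional space $\cA_{\le d}$, so it is well defined. A change of variables gives $(\mathsf p_\psi f)(k_0x)=\psi(k_0)(\mathsf p_\psi f)(x)$ for all $k_0\in K$.

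Then pass from $K$ to $\cK$. For fixed $x$, the map $g\mapsto (\mathsf p_\psi f)(gx)-\psi(g)(\mathsf p_\psi f)(x)$ is a regular function on $\cK$ that vanishes on $K$. Since $K$ is Zariski dense in $\cK$, it vanishes identically, so $\mathsf p_\psi f\in\cA^{\cK,\psi}$.

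Finally, the reduction of $\mathsf p_\psi f$ is $[f]$. The quotient map $\cA_{\le d}\to R_{\le d}$ is linear and continuous and commutes with the $K$-action, so it commutes with the integral:
\[
 [\mathsf p_\psi f]=\int_K\psi(k)^{-1}\,(k^{-1}\cdot[f])\,dk .
\]
Here $[f]$ is $\psi$-semi-invariant, so $k^{-1}\cdot[f]=\psi(k)[f]$ in the point-weight convention, and the integrand is the constant $[f]$. Hence $[\mathsf p_\psi f]=[f]$, which proves \eqref{eq:quotient-surjection}.

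The only delicate point is bookkeeping with the sign and inverse conventions. The point-weight convention $f(gx)=\psi(g)f(x)$ has to match the $\rho$-convention used for $(\cA/J)^{K,\psi}$ in Section~\ref{sec:Gaussian}, and $\psi(k)$ versus $\psi(k)^{-1}$ in the projector must be chosen accordingly. The same computation, read for $K$ alone, also gives \eqref{eq:weight-identification}: a $K$-semi-invariant class lifts to a $K$-semi-invariant polynomial, which is $\cK$-semi-invariant by Zariski density.
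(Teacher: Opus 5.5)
Your proof is correct and is essentially the paper's argument: average a lift over $K$ with weight $\psi(k)^{-1}$ inside the finite-dimensional space $\cA_{\le d}$, note that the class is unchanged because $[f]$ is already $\psi$-semi-invariant, and upgrade $K$-semi-invariance to $\cK$-semi-invariance by Zariski density. Your closing remark also matches the paper, which likewise derives \eqref{eq:weight-identification} from this same averaging computation.
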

\begin{proof}
Let $[f]\in R^{\cK,\psi}$ and choose a polynomial lift $f$ of
degree at most $d$. Averaging in the finite-dimensional space
$\cA_{\leq d}$ gives
\[
 \widetilde f(x)=\int_K\psi(k)^{-1}f(kx)\,dk.
\]
It satisfies $\widetilde f(kx)=\psi(k)\widetilde f(x)$ and
$[\widetilde f]=[f]$. Zariski density gives
$\widetilde f\in\cA^{\cK,\psi}$.
\end{proof}

\subsection{Polynomial generators}
All paths below are taken over $\cA$. Auxiliary variables are independent commuting
indeterminates fixed by $\cK$ and $s^*$; an
\emph{auxiliary-variable coefficient} means a coefficient in these
variables. Extension of modules and algebra actions to auxiliary
polynomial rings is understood.

\begin{proposition}\label{prop:trace-generators}
In each of the three models, $\cA^{\cK}$ is generated by traces of
closed paths.
\end{proposition}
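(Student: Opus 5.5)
The plan is to reduce each case to the first fundamental theorem (FFT) of classical invariant theory for the relevant complexified compact group. In all three cases the algebra $\cA=\C[X_\C]$ is the coordinate ring of a representation space of a quiver with four vertices $0,1,2,3$. The arrows are the blocks $a,b,c,d$ and $y_a,y_b,y_c,y_d$. The group $\cK=\prod_i\cK_{\DD}(\cdot)$ acts vertexwise.

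For $\DD=\C$ the eight blocks are independent, and $\cK=\GL_p\times\GL_p\times\GL_q\times\GL_q$ acts by the quiver action. The arrows are exactly the eight arrows $\widetilde P_i,\widetilde Q_i$ of \eqref{eq:PQ}, up to sign. By the theorem of Le Bruyn--Procesi, invariants of $\prod\GL$ on quiver representations are generated by traces of oriented cycles, so $\cA^{\cK}$ is generated by the $f_\gamma$. A closed path in the four-cycle quiver, which has both orientations along each edge, is precisely a cycle in the sense of that theorem.

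For $\DD=\R$ and $\DD=\HH$ the coordinates are only $a,b,c,d$. The group is $\OO_p\times\OO_p\times\OO_q\times\OO_q$, respectively $\Sp_{2p}\times\Sp_{2p}\times\Sp_{2q}\times\Sp_{2q}$. The space is a representation of a quiver with four vertices and four arrows, where each vertex carries an orthogonal or symplectic form. Here I will use the FFT for orthogonal or symplectic representations of quivers. These are the symmetric-quiver results of Zubkov, and of Lopatin--Zubkov, with Domokos--Zubkov for the semi-invariant refinements. They say the invariants are generated by traces of closed paths in the doubled quiver, where each arrow $F$ may be traversed backwards via its adjoint $F^\diamond$. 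Since $y_a=a^\diamond$ and so on, the adjoint arrows are exactly the $Y$-blocks, up to signs. Hence the doubled-quiver closed paths are the closed paths of \eqref{eq:PQ}, with signs absorbed into scalars, and their traces are the $f_\gamma$.

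If I want a self-contained argument in place of the quiver theorems, the fallback is the classical route. First polarize and multilinearize using the Aronhold/Cayley reduction, which is legitimate in characteristic zero via the Reynolds operator. Then apply the tensor FFT vertex by vertex. At a $\GL$-vertex the invariant contractions are pairings of $V_i$ with $V_i^\vee$. At an $\OO$- or $\Sp$-vertex they are pairings via the invariant form, which identifies $V_i\cong V_i^\vee$. A multilinear invariant in the matrix entries is therefore a complete contraction of the indices. Each arrow matrix has one incoming and one outgoing index, and each vertex pairs them up, so the contraction graph decomposes into disjoint cycles. Each cycle is read as a trace of a product of arrows and adjoints. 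Restitution then gives products of closed-path traces.

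I expect the main obstacle to be the bookkeeping in the orthogonal and symplectic cases. The issue is checking that inserting the form matrix when a contraction reverses orientation produces exactly the adjoint $F^\diamond$ of \eqref{eq:sharp}. One then has to identify $F^\diamond$ with the corresponding $Y$-block, including the transposition $V_1\leftrightarrow V_0$ roles and sign conventions. In the symplectic case the signs from $\mathsf J$ need care, but only up to a scalar factor $\pm1$. That suffices, since we only need generation.
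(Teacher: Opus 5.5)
Your proposal is correct and is essentially the paper's argument: Le Bruyn--Procesi for $\DD=\C$, and Zubkov's invariant theory of mixed quiver representations for $\DD=\R,\HH$, with the $Y$-blocks recognized as the reversed (adjoint) arrows so that the relevant cycles are exactly the closed paths of \eqref{eq:PQ}. The one step the paper states explicitly and you pass over is that Zubkov's generators are characteristic-polynomial coefficients of closed paths, which Newton's identities over $\C$ convert into traces of powers of closed paths; your contraction-graph fallback via polarization and the vertexwise first fundamental theorem is the standard characteristic-zero argument behind these citations and is also sound.
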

\begin{proof}
For $\DD=\C$, this is \cite[Theorem~1]{LBP}. For $\DD=\R,\HH$,
apply \cite[Theorems~1 and~2]{Zubkov}. These results give generators
by characteristic coefficients of closed paths in the corresponding
mixed-quiver model. Over $\C$, Newton's identities express these
coefficients in terms of traces of powers of the same closed paths.
\end{proof}

In the complex model, for $m\in\Z$, put
\begin{equation}\label{eq:lambda}
 \lambda_m(g)=\det(g_0)^m\det(g_1)^{-m}.
\end{equation}

\begin{proposition}\label{prop:det-generators}
Suppose $\DD=\C$ and $m>0$. The space
$\cA^{\cK,\lambda_m}$ is spanned by products of closed-path traces
and auxiliary-variable coefficients of
\begin{equation}\label{eq:path-determinants}
 \det(F_{ij})_{1\leq i,j\leq m},
 \qquad
 F_{ij}=\sum_\nu z_{ij\nu}F_{ij\nu},
\end{equation}
where $F_{ij\nu}:V_{1,\cA}\to V_{0,\cA}$ are paths and $z_{ij\nu}$
are independent auxiliary variables, one for each chosen path in
each block.
For $m<0$, interchange the endpoints and replace $m$ by $|m|$.
\end{proposition}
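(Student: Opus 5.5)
We would deduce Proposition~\ref{prop:det-generators} from the known description of semi-invariants of quiver representations, in the form due to Derksen--Weyman, Schofield--van den Bergh, and Domokos--Zubkov. The relevant quiver is the four-cycle with arrows $\widetilde P_i,\widetilde Q_i$ and dimension vector $(p,p,q,q)$. In the model $\DD=\C$ we have $\cK=\GL_p\times\GL_p\times\GL_q\times\GL_q$, and $\cA$ is exactly the coordinate ring of the representation space of this quiver: $a,b,c,d,y_a,y_b,y_c,y_d$ are independent matrices with the sources and targets listed in Section~\ref{sec:fourcycle}. The weight $\lambda_m$ of \eqref{eq:lambda} is $\det(g_0)^m\det(g_1)^{-m}$. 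Semi-invariants of this weight are therefore determinantal semi-invariants $c^{V}$ attached to presentations
\[
 V_{1}^{\oplus m}\longrightarrow V_0^{\oplus m}
\]
of the kind that pair $m$ copies of the projective at vertex $1$ with $m$ copies at vertex $0$, together with products with $\cK$-invariants.

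The plan has three steps.

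First, we invoke the theorem of Derksen--Weyman (or Domokos--Zubkov) in the form: for a weight $\sigma=\sum_i\sigma_i\det(g_i)$, the space $\cA^{\cK,\sigma}$ is spanned by the functions
\[
 x\mapsto\det\Bigl(\sum_\nu z_\nu F_{\nu}(x)\Bigr).
\]
Here the matrix is assembled from blocks $\Hom(P_j,P_i)$, i.e.\ linear combinations of paths, and runs between direct sums of vertex spaces whose multiplicities are read off from $\sigma$. For $\sigma=\lambda_m$ with $m>0$, the source is $V_1^{\oplus m}$ and the target is $V_0^{\oplus m}$. Formally one also allows presentations $\bigoplus_j P_j^{\beta_j}\to\bigoplus_iP_i^{\alpha_i}$ with $\alpha-\beta$ equal to the weight. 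Such presentations may involve extra equal multiplicities at other vertices.

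Second, we reduce these general presentations to the pure form \eqref{eq:path-determinants}, modulo products with closed-path traces. This is the main obstacle. The plan is to use Schur-complement and row-reduction arguments with generic coefficients, combined with the fact that $\cA^{\cK,\lambda_m}$ is a module over $\cA^{\cK}$. Proposition~\ref{prop:trace-generators} identifies $\cA^{\cK}$ as generated by closed-path traces. Alternatively, one can cite directly the refinement in Domokos--Zubkov. That refinement states that the semi-invariants of weight $\chi_\alpha-\chi_\beta$ with disjoint supports $\alpha,\beta$ are spanned by determinants of block matrices of paths from the $\beta$-vertices to the $\alpha$-vertices, multiplied by invariants. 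Taking $\alpha=m\epsilon_0$ and $\beta=m\epsilon_1$ gives exactly the $m\times m$ block determinant with $F_{ij}$ a generic combination of paths $V_1\to V_0$.

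Third, we note that multilinear expansion in the auxiliary variables $z_{ij\nu}$ turns the generic determinant into its auxiliary-variable coefficients. This yields the spanning set as stated. The case $m<0$ follows by symmetry: reversing the roles of vertices $0$ and $1$ gives determinants of paths $V_0\to V_1$.
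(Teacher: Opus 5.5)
Your overall strategy (quiver semi-invariant theory followed by polarization in auxiliary variables) is the one the paper uses. As written, however, Step~2, which you yourself flag as the main obstacle, is not closed.

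\textbf{Problems with Step~2 and Step~1.}
\begin{itemize}
\item The Schur-complement reduction does not give a polynomial identity as stated. Write a general block matrix as $\bigl(\begin{smallmatrix}X&Y\\ Z&W\end{smallmatrix}\bigr)$, with $W$ the block on the extra multiplicities. Then $\det=\det(W)\det(X-YW^{-1}Z)$ divides by $\det W$, and $W^{-1}$ is not a combination of paths. One could rescue it: add $\lambda I$ to the trivial-path coefficient in $W$, expand $W^{-1}$ in powers of $\lambda^{-1}$, and extract the $\lambda^0$ coefficient of the polynomial $\det$. But this has to be carried out, and you do not do it.
\item The alternative ``refinement'' of Domokos--Zubkov is asserted rather than checked. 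Its content is essentially the proposition itself, so citing it defers the proof rather than giving one.
\item Derksen--Weyman's $c^V$ theorem requires a quiver without oriented cycles. The doubled four-cycle here has many, e.g.\ $V_0\xrightarrow{y_a}V_1\xrightarrow{-a}V_0$. So Step~1 must rest on Schofield--Van den Bergh or Domokos--Zubkov.
\end{itemize}

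\textbf{The missing idea.} The obstacle never arises if you use the contraction (first fundamental theorem) form of the result. This is what the paper invokes via \cite[Theorems~2.1 and~2.3]{SVdB}.
\begin{enumerate}
\item Polarize a multihomogeneous $\lambda_m$-semi-invariant to a multilinear one.
\item By the FFT for $\GL$ with determinantal characters, this multilinear function is a combination of complete contractions of the arrow tensors against $m$ copies of a volume form on $V_0$, $m$ copies of the dual volume form on $V_1$, and identity pairings at every vertex.
\item Following the pairings, each strand either closes up, contributing a closed-path trace, or is an open path. An open path runs from a slot of a dual volume form at vertex $1$ to a slot of a volume form at vertex $0$. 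Since $V_0\neq V_1$, no length-zero strands occur, and there are exactly $mp$ open strands.
\item What remains is a multilinear semi-invariant of weight $(m,-m)$ of $mp$ elements $\gamma_k\in\Hom(V_1,V_0)$. Equivalently, it is a semi-invariant of the $mp$-arrow Kronecker quiver with dimension vector $(p,p)$, which has no oriented cycles. Its semi-invariants are spanned by $\det(\sum_kA_k\otimes X_k)$ with $A_k\in\Mat_m(\C)$.
\item Take generic $A_k$, extract coefficients, substitute $X_k=\gamma_k$, and restitute. This gives precisely the auxiliary coefficients of \eqref{eq:path-determinants} times trace products.
\end{enumerate}
No extra multiplicities at other vertices appear anywhere in this argument, so no Schur-complement step is needed.
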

\begin{proof}
Under the $\cK$-action, every path $F_{ij\nu}:V_1\to V_0$ transforms as
\[
 F_{ij\nu}\longmapsto g_0F_{ij\nu}g_1^{-1}.
\]
Hence the block matrix $F=(F_{ij})_{1\leq i,j\leq m}$ transforms as
\[
 F\longmapsto (I_m\otimes g_0)F(I_m\otimes g_1^{-1}),
\]
and therefore
\[
 \det(F)\longmapsto
 \det(g_0)^m\det(g_1)^{-m}\det(F)
 =\lambda_m(g)\det(F).
\]
Thus every auxiliary-variable coefficient of \eqref{eq:path-determinants}
is a $\lambda_m$-semi-invariant, and multiplying by closed-path traces
preserves the same weight. The spanning assertion follows from the
contraction description and polarization of quiver semi-invariants
\cite[Theorems~2.1 and~2.3]{SVdB}.
For $m<0$, interchange the two endpoints and replace $m$ by $|m|$.
\end{proof}

In the real model define
\begin{equation}\label{eq:delta}
 \delta(g)=\det(g_0)\det(g_1),\qquad g\in\cK.
\end{equation}
Here $V_0$ and $V_1$ both have dimension $p$.

\begin{proposition}\label{prop:Pf-generators}
Suppose $\DD=\R$. The space $\cA^{\cK,\delta}$ is spanned by
products of closed-path traces and auxiliary-variable coefficients
of the Pfaffians
\begin{equation}\label{eq:path-Pfaffians}
 \Pf\begin{pmatrix}U_0&F\\-F^t&U_1\end{pmatrix},
\end{equation}
where
\[
 F=\sum_\nu z_\nu F_\nu,\qquad
 U_i=\sum_\mu x_{i\mu}U_{i\mu},
\]
the $F_\nu:V_1\to V_0$ are paths, the $U_{i\mu}$ are skew parts of
closed-path matrices at $V_i$, and the $x_{i\mu},z_\nu$ are commuting
auxiliary variables. The matrix in \eqref{eq:path-Pfaffians} has
size $2p$.
\end{proposition}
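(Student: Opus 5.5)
Two things have to be shown: the displayed Pfaffians, and their coefficients times closed-path traces, really are $\delta$-semi-invariants; and conversely they span $\cA^{\cK,\delta}$. The forward direction is a direct computation. The spanning direction I would import from the invariant theory of orthogonal quiver representations, in the same way that Proposition~\ref{prop:det-generators} relied on \cite{SVdB}.

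\emph{Semi-invariance.} Let $g=(g_0,g_1,g_2,g_3)\in\cK$ with $g_i\in\OO(\C)$, so that $g_i^t=g_i^{-1}$.
\begin{itemize}
\item A path $F_\nu:V_1\to V_0$ transforms as $F_\nu\mapsto g_0F_\nu g_1^{-1}$.
\item A closed-path matrix at $V_i$ transforms by conjugation $g_i(\cdot)g_i^{-1}$. Since $g_i$ is orthogonal, this conjugation is the same as the congruence $g_i(\cdot)g_i^t$, so it commutes with taking skew parts and sends $U_i\mapsto g_iU_ig_i^t$.
\item $-F^t\mapsto g_1(-F^t)g_0^t$.
\end{itemize}
Put $S=\diag(g_0,g_1)$. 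The whole block matrix then transforms as $N\mapsto SNS^t$. By \eqref{eq:Pf-congruence} the Pfaffian is multiplied by $\det S=\det g_0\det g_1=\delta(g)$. Each auxiliary-variable coefficient is therefore in $\cA^{\cK,\delta}$. Multiplying by closed-path traces keeps this property, since those traces are $\cK$-invariant.

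\emph{Spanning.} Here I would use the semi-invariant theory for orthogonal (mixed) quiver representations, which is the $\delta$-weight analogue of \cite[Theorems~1 and~2]{Zubkov}. The relevant results are Lopatin--Zubkov on semi-invariants of mixed representations of quivers, where semi-invariants of $\OO$-type groups are generated by partial linearizations of Pfaffians of block matrices built from paths together with invariants.

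The concrete steps are as follows.
\begin{enumerate}
\item Note that $\delta$ is trivial on $\SO\times\SO$, so a $\delta$-semi-invariant is an $\SO(V_0)\times\SO(V_1)\times\OO(V_2)\times\OO(V_3)$-invariant that is odd under reflections at $V_0$ and at $V_1$.
\item By first fundamental theorem considerations, such invariants need one $\det$-type contraction at each of $V_0$ and $V_1$. In the Lopatin--Zubkov framework these appear as block Pfaffians $\Pf\begin{pmatrix}U_0&F\\-F^t&U_1\end{pmatrix}$. The diagonal blocks are skew matrices of paths at $V_i$, and the off-diagonal block is a linear combination of paths $V_1\to V_0$.
\item Polarize with the auxiliary variables $x_{i\mu},z_\nu$ to reduce general linear combinations to the displayed form.
\end{enumerate}

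The main obstacle is to match the generators from the literature to precisely the generators stated here. Three points need checking.
\begin{itemize}
\item The diagonal entries allowed in the general theory are skew parts of closed paths at $V_0$ and $V_1$; in the literature these may be given as arbitrary skew combinations of paths. I would convert between the two descriptions using Lemma~\ref{lem:skew-paths}.
\item Pfaffians of larger size, or Pfaffians with several $V_0$/$V_1$ blocks, either reduce to size-$2p$ ones times invariants or contain two contractions at a vertex. In the latter case they factor as a $\det$ at $V_0$ times a $\det$ at $V_1$ of weight $\delta^2=1$-type products, and so lie in the span.
\item The size count: the block matrix has size $\dim V_0+\dim V_1=2p$, which is exactly what is needed for the weight to be $\det g_0\det g_1$ with exponent one.
\end{itemize}
Once this matching is done, closed-path traces absorb the remaining invariant factors by Proposition~\ref{prop:trace-generators}.
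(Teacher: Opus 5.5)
Your route is the paper's. You get semi-invariance from $B\mapsto SBS^t$ with $S=\diag(g_0,g_1)$ and \eqref{eq:Pf-congruence}, and spanning by passing to $\cK^+=\SO(V_0)\times\SO(V_1)\times\OO(V_2)\times\OO(V_3)$ and importing block-Pfaffian generators; the paper uses \cite[Theorem~1 and \S3.2]{Lopatin} for $\cA^{\cK^+}$. The forward direction is fine. The matching step, which you rightly call the crux, is not correct as written.

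First, Lemma~\ref{lem:skew-paths} cannot be used here. It is a statement in $\End_R(V_{i,R})$, and its proof rests on $PQ=QP$, which holds only modulo $J$; in this proposition all paths are over $\cA$. For example, the closed path $ay_aby_b=aa^tbb^t$ at $V_0$ has nonzero skew part in $\cA$, but its image in $R$ is symmetric. No conversion lemma is needed anyway: taking skew parts is linear, so a skew element of the span of closed-path matrices is already a combination of skew parts of closed paths.

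Second, your claim that block Pfaffians with two occurrences of $V_0$ or $V_1$ ``factor as a $\det$ at $V_0$ times a $\det$ at $V_1$'' is unjustified. More importantly, your sketch never handles \emph{products} of generators, which is where the real bookkeeping lies. The paper argues as follows.
\begin{itemize}
\item Specialized to this four-cycle, $\cA^{\cK^+}$ is generated by closed-path traces, by coefficients of $\Pf(U_0)$ and $\Pf(U_1)$ (present only for $p$ even), and by coefficients of the size-$2p$ block Pfaffian. Thus each special-orthogonal vertex occurs at most once.
\item By \eqref{eq:Pf-congruence}, these generators have $\cK/\cK^+$-weights $1$, $\det g_0$, $\det g_1$, $\delta$. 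Monomials in them are weight vectors, so $\cA^{\cK,\delta}$ is spanned by the monomials of weight $\delta$.
\item In such a monomial, pair off factors of equal weight. Each pair lies in $\cA^{\cK}$, hence is a polynomial in closed-path traces by Proposition~\ref{prop:trace-generators}.
\item What remains is either a single block Pfaffian or a product $\Pf(U_0)\Pf(U_1)=\Pf(\diag(U_0,U_1))$, which is the block Pfaffian with $F=0$.
\end{itemize}
That last case is missing from your list. Replacing your second bullet by this weight argument completes the proof.
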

\begin{proof}
Put
\[
 B=\begin{pmatrix}U_0&F\\-F^t&U_1\end{pmatrix}.
\]
For $g\in\cK$, let $S=\diag(g_0,g_1)$. Since $g_i^{-1}=g_i^t$,
\[
 B\longmapsto SBS^t,
 \qquad
 \Pf(SBS^t)=\det(S)\Pf(B)=\delta(g)\Pf(B).
\]
Hence every auxiliary-variable coefficient of \eqref{eq:path-Pfaffians}
is a $\delta$-semi-invariant; multiplication by closed-path traces
preserves the same weight.

It remains to prove the spanning assertion. Put
\[
 \cK^+=\SO(V_0)\times\SO(V_1)\times\OO(V_2)\times\OO(V_3).
\]
By \cite[Theorem~1 and \S3.2]{Lopatin}, specialized to the present
four-cycle, $\cA^{\cK^+}$ is generated by closed-path traces and by
the auxiliary-variable coefficients of
\[
 \Pf(U_0),\qquad
 \Pf(U_1),\qquad
 \Pf\begin{pmatrix}U_0&F\\-F^t&U_1\end{pmatrix},
\]
where the first two types occur only when $p$ is even. Here $U_i$
and $F$ are as above. Indeed, the only special-orthogonal vertices
are $V_0$ and $V_1$, so these are precisely the Pfaffian generators
in the present four-cycle.

By \eqref{eq:Pf-congruence}, the $\cK/\cK^+$-weights of closed-path
traces and of the three Pfaffians above are, respectively,
\[
 1,\qquad (g\mapsto\det(g_0)),\qquad
 (g\mapsto\det(g_1)),\qquad \delta.
\]
Hence the $\delta$-isotypic component is generated over
$\cA^{\cK}$ by the coefficients of
\[
 \Pf\begin{pmatrix}U_0&F\\-F^t&U_1\end{pmatrix}
\]
together with products of a generator of weight $g\mapsto\det(g_0)$
and one of weight $g\mapsto\det(g_1)$; the latter are already included
by taking $F=0$.
Finally, $\cA^{\cK}$ is generated
by closed-path traces by Proposition~\ref{prop:trace-generators}.
\end{proof}

\subsection{The involution on the quotient}\label{subsec:quotient-symmetry-proof}

\begin{proof}[Proof of Theorem~\ref{thm:quotient-symmetry}]
For a normalized character $\eta_0$, the absolute-value and reduced-norm
factors in \eqref{eq:normalized} are trivial on $K$. In the vertex
notation of \eqref{eq:lambda} and \eqref{eq:delta}, the inverse point
weight $\chi_{\eta_0}^{-1}|_K$ is therefore the restriction of
$\lambda_{-m}$ for $\DD=\C$, of $\delta^\eps$ for $\DD=\R$,
and of the trivial character for $\DD=\HH$.
By \eqref{eq:weight-identification}, it is enough to show that $s^*$
acts identically on the following spaces:
\begin{align}
 &R^{\cK,\lambda_m}\quad(m\in\Z),&&\DD=\C;
 \label{eq:quotient-C}\\
 &R^{\cK}\ \text{and}\ R^{\cK,\delta},&&\DD=\R;
 \label{eq:quotient-R}\\
 &R^{\cK},&&\DD=\HH.
 \label{eq:quotient-H}
\end{align}

\emph{Invariants.}
Lemma~\ref{lem:quotient-surjection} and Proposition~\ref{prop:trace-generators}
show that $R^{\cK}$ is generated by the images of closed-path traces.
By Corollary~\ref{cor:trace-symmetry}, all these generators are fixed
by $s^*$. This proves \eqref{eq:quotient-H}, the invariant part of
\eqref{eq:quotient-R}, and the case $m=0$ of \eqref{eq:quotient-C}.

\emph{The complex case, $m\neq0$.}
Suppose first that $\DD=\C$ and $m>0$.
By Lemma~\ref{lem:quotient-surjection} and
Proposition~\ref{prop:det-generators}, and since trace factors are
fixed, it remains to treat the determinants
\eqref{eq:path-determinants}. Adjoin their auxiliary variables to $R$.
Lemma~\ref{lem:two-paths} gives
\[
 F_{ij}=Ac_{ij,V_{1,R}}+Bd_{ij,V_{1,R}},
 \qquad c_{ij},d_{ij}\in\cB=R[t,u,v],
\]
where $A,B$ are \eqref{eq:two-paths}, and the two endpoint actions of
$\cB$ are trace-compatible. By \eqref{eq:sF},
$s^*F_{ij}=F_{ij}^t$. Hence
\[
 s^*\det(F_{ij})=\det(F_{ij}^t)=\det(F_{ij})
\]
by Proposition~\ref{prop:determinant}, applied with $V=V_{1,R}$,
$W=V_{0,R}$ and $r=m$.
Every auxiliary coefficient is therefore fixed. For $m<0$, apply the
same proposition to the reversed endpoint modules $V=V_{0,R}$,
$W=V_{1,R}$, the generators $P|_{V_{0,R}},Q^3|_{V_{0,R}}$ from
Lemma~\ref{lem:two-paths}, and the analogue of \eqref{eq:sF} for paths
$V_0\to V_1$, which is proved in the same way.
This proves \eqref{eq:quotient-C}.

\emph{The real case, weight $\delta$.}
Use the notation $t,w,\Omega$ of
\eqref{eq:tuv} and \eqref{eq:wOmega}.
Adjoin the auxiliary variables in \eqref{eq:path-Pfaffians} to $R$,
and let $s^*$ fix them. By Lemma~\ref{lem:skew-paths}, applied
coefficientwise in these variables, every Pfaffian in
\eqref{eq:path-Pfaffians} has the form
\begin{equation}\label{eq:Phi}
 \Phi(h,k,F)=
 \Pf\begin{pmatrix}
 \Omega_0h(t_0,w_0)&F\\
 -F^t&\Omega_1k(t_1,w_1)
 \end{pmatrix},
\end{equation}
where $h,k$ are polynomials with coefficients in the auxiliary variables.
The algebra $\cB^+=R[T,W]$ acts on the two endpoint modules by
$T\mapsto t_i$, $W\mapsto w_i$. These actions are symmetric
and trace-compatible. Since $F$ is a linear combination of paths,
\eqref{eq:PQ-commute} gives $Ft_1=t_0F$ and $Fw_1=w_0F$.
The endomorphisms $\Omega_i$ are skew-symmetric and commute with
these actions.

Equations~\eqref{eq:sPQ} and \eqref{eq:adjoint-relations} give
\[
 s^*\bigl(\Omega_0h(t_0,w_0)\bigr)
   =-\Omega_1h(t_1,w_1),\qquad
 s^*\bigl(\Omega_1k(t_1,w_1)\bigr)
   =-\Omega_0k(t_0,w_0),
\]
and $s^*F=F^t$ by \eqref{eq:sF}. Consequently,
\[
 s^*\Phi(h,k,F)
 =\Pf\begin{pmatrix}
 -\Omega_1h(t_1,w_1)&F^t\\
 -F&-\Omega_0k(t_0,w_0)
 \end{pmatrix}.
\]
Put
\[
 \Pi=\begin{pmatrix}0&I_p\\ I_p&0\end{pmatrix},\qquad
 B'=\begin{pmatrix}
 \Omega_0k(t_0,w_0)&F\\
 -F^t&\Omega_1h(t_1,w_1)
 \end{pmatrix}.
\]
Then
\[
 -\Pi B'\Pi^t
 =\begin{pmatrix}
 -\Omega_1h(t_1,w_1)&F^t\\
 -F&-\Omega_0k(t_0,w_0)
 \end{pmatrix},
 \qquad \det\Pi=(-1)^p,
\]
and therefore
\[
 s^*\Phi(h,k,F)
 =\Pf(-\Pi B'\Pi^t)
 =(-1)^p\det(\Pi)\Pf(B')
 =\Phi(k,h,F).
\]
Proposition~\ref{prop:Pfaffian}, applied with $\cB=\cB^+$,
$W=V_{0,R}$, $V=V_{1,R}$,
$C_W=\Omega_0$, $C_V=\Omega_1$, now gives
\[
 \Phi(k,h,F)=\Phi(h,k,F).
\]
Thus every Pfaffian generator and every auxiliary coefficient is
fixed. Its trace factors are fixed by
Corollary~\ref{cor:trace-symmetry}. This proves
\eqref{eq:quotient-R} and completes the theorem.
\end{proof}

\section{From the quotient to multiplicity one}\label{sec:conclusion}

\subsection{Equivariant extension}\label{subsec:extension}

\begin{proposition}\label{prop:matrix-distributions}
For every normalized character $\eta_0$ in \eqref{eq:normalized},
every $\chi_{\eta_0}$-equivariant tempered distribution on
$\Mat_n(\DD)$ is $s$-invariant.
\end{proposition}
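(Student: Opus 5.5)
This follows by combining Theorem~\ref{thm:Gaussian} with Theorem~\ref{thm:quotient-symmetry} in the matrix model of Section~\ref{sec:matrix-model}. We take $X=\Mat_n(\DD)$ as a real vector space, let $L=H\times H$ act by \eqref{eq:action}, and fix the maximal compact subgroup $K$ of \eqref{eq:compact} with the standard Cartan decomposition. We also fix the $K$-invariant Frobenius inner product $\langle M,N\rangle=\operatorname{Re}\tr(M^*N)$, the involution $s$ of \eqref{eq:s}, and the automorphism $\sigma(h_l,h_r)=(\alpha(h_r),\alpha(h_l))$ of $L$. Finally, we take $\chi=\chi_{\eta_0}$.

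The first task is to check the hypotheses of Theorem~\ref{thm:Gaussian}, as recorded in Section~\ref{sec:matrix-model}.
\begin{itemize}
\item $s$ is a linear involution of $X$, orthogonal for $\langle\,,\rangle$, since $\operatorname{Re}\tr$ is invariant under $s$ and under quaternionic conjugation.
\item $s(h_lMh_r^{-1})=s(h_r)^{-1}s(M)s(h_l)=\alpha(h_r)\,s(M)\,\alpha(h_l)^{-1}$, which is exactly $s(gx)=\sigma(g)s(x)$.
\item $\sigma$ preserves $K$ and $\fp$, because $\alpha$ preserves $K_\DD$ and the Hermitian matrices (up to sign on $\fp$, which is still $\fp$).
\item $\chi_{\eta_0}\circ\sigma=\chi_{\eta_0}$. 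We have $\chi_{\eta_0}(\sigma(h_l,h_r))=\eta_0(\alpha(h_r))\eta_0(\alpha(h_l))^{-1}$. Since $\eta_0\circ s=\eta_0$ and $\eta_0$ is a character, $\eta_0\circ\alpha=\eta_0^{-1}$, which returns $\eta_0(h_l)\eta_0(h_r)^{-1}$.
\end{itemize}
One point needs care. $\eta_0\circ s=\eta_0$ uses that $\det(A^t)=\det A$ for $\DD=\R,\C$, and that $\N(\overline A^{\,t})=\N(A)$ for $\DD=\HH$. The latter holds because $\iota(\overline A^{\,t})=\mathsf J^{-1}\iota(A)^t\mathsf J$.

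Next we identify the Gaussian ideal. By Proposition~\ref{prop:quadratic-ideal}, the ideal $J$ of \eqref{eq:Gaussian-ideal} coincides, under the coordinate identifications, with the ideal \eqref{eq:moment} used in Sections~\ref{sec:fourcycle}--\ref{sec:semi-invariants}. The spaces $(\cA/J)^{K,\chi_{\eta_0}^{-1}|_K}$ in the two statements are therefore the same. The $K$-action on $\cA$ in Theorem~\ref{thm:Gaussian} is $[P]\mapsto[P(k\,\cdot)]$, and this matches the point-weight convention used in \eqref{eq:weight-identification}. Likewise $s^*$ is the same involution on both sides.

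Theorem~\ref{thm:quotient-symmetry} then says that $s^*$ acts identically on this weight space. Theorem~\ref{thm:Gaussian} gives that every $\chi_{\eta_0}$-equivariant tempered distribution on $X=\Mat_n(\DD)$ is $s$-invariant.

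The only real obstacle is bookkeeping. One must make sure the weight convention $\chi^{-1}|_K$ in Theorem~\ref{thm:Gaussian} agrees with the one handled in the proof of Theorem~\ref{thm:quotient-symmetry}, where it was identified with $\lambda_{-m}$, $\delta^\eps$, or the trivial character. Since all cases $m\in\Z$ and both $R^{\cK}$ and $R^{\cK,\delta}$ were covered there, a sign slip in this identification is harmless.
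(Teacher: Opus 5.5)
Your proposal is correct and follows the paper's proof: apply Theorem~\ref{thm:Gaussian} with $X=\Mat_n(\DD)$, $L=H\times H$ and $\chi=\chi_{\eta_0}$, take the hypotheses from Section~\ref{sec:matrix-model} and the identification of $J$ from Proposition~\ref{prop:quadratic-ideal}, and conclude with Theorem~\ref{thm:quotient-symmetry}. You also write out the compatibility checks that the paper leaves to Section~\ref{sec:matrix-model} (orthogonality of $s$, $s(gx)=\sigma(g)s(x)$, and $\chi_{\eta_0}\circ\sigma=\chi_{\eta_0}$ via $\eta_0\circ\alpha=\eta_0^{-1}$), and your weight identification of $\chi_{\eta_0}^{-1}|_K$ with the restriction of $\lambda_{-m}$, $\delta^\eps$, or the trivial character is exact, so no sign issue arises.
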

\begin{proof}
We apply Theorem~\ref{thm:Gaussian} with $X=\Mat_n(\DD)$,
$L=H\times H$, and $\chi=\chi_{\eta_0}$.
Section~\ref{sec:matrix-model} verifies the required compatibility
of the Frobenius inner product, $s$, and $\chi_{\eta_0}$, and
Proposition~\ref{prop:quadratic-ideal} identifies the Gaussian ideal
with \eqref{eq:moment}. Theorem~\ref{thm:quotient-symmetry} shows that
$s^*$ acts identically on $(\cA/J)^{K,\chi_{\eta_0}^{-1}|_K}$,
so the Gaussian criterion gives the assertion.
\end{proof}

\begin{lemma}\label{lem:weighted-extension}
Let a Lie group $L$ act linearly on a finite-dimensional real vector
space $X$. Let $q_0$ be a nonzero nonnegative polynomial on $X$ satisfying
\[
 q_0(gx)=a_0(g)q_0(x),\qquad g\in L,x\in X
\]
for a positive character $a_0$ of $L$, and put $U=\{q_0>0\}$.
Let $\chi:L\to\C^\times$ be a smooth character.
If $0\ne T\in\cS'(U)$ is $\chi$-equivariant, then there exist
$k\geq0$ and a nonzero $\chi a_0^k$-equivariant
$T'\in\cS'(X)$ such that $T'|_U=q_0^kT$.
If a linear involution $s$ of $X$ satisfies $q_0\circ s=q_0$
and makes $T$ odd, then $T'$ can be chosen odd.
\end{lemma}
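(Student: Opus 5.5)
The plan is to use the standard extension theory for tempered distributions on open semi-algebraic subsets (Aizenbud--Gourevitch--Schwartz, \cite{AGSch}). Multiplying a tempered distribution on $U$ by a high power of $q_0$ produces a distribution that extends to $X$. Equivariance is then restored by averaging, or more precisely by an algebraic isotypic projection.

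\emph{Step 1: a non-equivariant extension.} Since $U=\{q_0>0\}$ is an open Nash subset, the restriction map $\cS'(X)\to\cS'(U)$ is surjective \cite{AGSch}. So there is some $S\in\cS'(X)$ with $S|_U=T$, but it carries no equivariance. Instead I will use the finer statement, also from \cite{AGSch}: $\cS(U)$ consists of the Schwartz functions on $X$ vanishing to infinite order on $Z=\{q_0=0\}$, with the topology of $\cS(U)$ defined by weighted seminorms involving negative powers of $q_0$. Since $T$ is continuous, it is bounded by finitely many such seminorms. Hence there is $k$ such that $f\mapsto T(q_0^kf|_U)$, defined initially for $f\in\cS(U)$, is bounded by Schwartz seminorms of $X$ restricted to $U$. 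This gives a continuous extension $T'$ to the closure of $\cS(U)$ in $\cS(X)$. One could alternatively take, for each $f\in\cS(X)$, the limit $T'(f)=\lim_{\epsilon\to0}T(q_0^k\phi_\epsilon f)$ with a cutoff $\phi_\epsilon=\phi(q_0/\epsilon)$; choosing $k$ large makes the boundary terms vanish as $\epsilon\to0$.

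\emph{Step 2: canonical choice, hence equivariance.} The key point I will aim for is to define $T'$ canonically, for instance as $\lim_{\epsilon\to 0}T(q_0^k\phi(q_0/\epsilon)f)$. Because $q_0(g^{-1}x)=a_0(g)^{-1}q_0(x)$, the cutoff transforms as $\phi(q_0/\epsilon)\circ g^{-1}=\phi(q_0/(a_0(g)\epsilon))$, which is again of the same form with $\epsilon$ rescaled. The limit is therefore unchanged, and
\[
 T'(\rho(g)f)=\lim T\bigl(\rho(g)[q_0^ka_0(g)^k\phi(q_0/(a_0(g)\epsilon))f]\bigr)=\chi(g)a_0(g)^kT'(f).
\]
The same computation shows that when $s$ fixes $q_0$, the cutoff is $s$-invariant, so $T'$ is odd whenever $T$ is. $T'$ is nonzero because its restriction $q_0^kT$ is nonzero, as $q_0$ is positive on $U$.

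\emph{Main obstacle.} The crux is proving that the limit exists and defines a continuous functional on $\cS(X)$. This requires a \L{}ojasiewicz-type estimate: $T$ has finite order and polynomial growth in $1/q_0$, say $|T(\varphi)|\le C\sum\sup|q_0^{-N}x^\beta\partial^\gamma\varphi|$. The derivatives of $\phi(q_0/\epsilon)$ cost factors $\epsilon^{-|\gamma|}$, supported where $q_0\sim\epsilon$. Taking $k>N+\text{order}$ therefore makes the differences between the values at two scales $\epsilon$ and $\epsilon'$ tend to zero, giving a Cauchy limit bounded by Schwartz seminorms of $f$. I would cite the Nash version of this estimate from \cite{AGSch} rather than reprove it.
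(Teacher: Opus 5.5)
Your proposal is correct, but it takes a different route from the paper.

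\textbf{The paper's route.} The paper makes no canonical choice. It takes an arbitrary tempered extension $\widetilde T$ of $T$ (via \cite[Corollary~5.4.4]{AGSch}) and passes to its odd part if needed. It then notes that the defect $E_g(f)=\widetilde T(\rho(g)f)-\chi(g)\widetilde T(f)$ is supported on $\{q_0=0\}$ and has order at most the order $m$ of $\widetilde T$, uniformly in $g$. Hence $q_0^{m+1}$ annihilates every $E_g$, and $q_0^{m+1}\widetilde T$ is $\chi a_0^{m+1}$-equivariant and still odd. This needs only two inputs: $\widetilde T$ has finite order, and a distribution of order $m$ kills functions vanishing to order $m$ on its support. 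No estimates on $T$ near the boundary are required.

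\textbf{Your route.} Your cutoff limit $T'(f)=\lim_{\epsilon\to0}T(q_0^k\phi(q_0/\epsilon)f)$ gets equivariance and oddness for free from naturality of the cutoff. It also produces an extension independent of $\phi$, hence compatible with any symmetry that rescales $q_0$ by a positive factor. The price is the convergence estimate, and that estimate does hold.
\begin{itemize}
\item By \cite{AGSch}, $\cS(U)$ is the closed subspace of $\cS(X)$ of functions flat on $\{q_0=0\}$, with the induced topology. So $T$ is bounded by finitely many ordinary Schwartz seminorms of some order $m$, and your $N$ can be taken to be $0$.
\item For $\epsilon'<\epsilon$, the Leibniz rule bounds these seminorms of $q_0^k(\phi(q_0/\epsilon)-\phi(q_0/\epsilon'))f$ by $O(\epsilon^{k-m})$ times a Schwartz seminorm of $f$. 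Taking $k>m$ gives a Cauchy limit that is continuous on $\cS(X)$.
\item The same bound applied to $q_0^k(1-\phi(q_0/\epsilon))f$ with $f\in\cS(U)$ gives $T'|_U=q_0^kT$. You assert this but should derive it.
\end{itemize}

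\textbf{Two minor slips.}
\begin{itemize}
\item Step~1 as written yields no extension to all of $\cS(X)$, since $\cS(U)$ is already closed in $\cS(X)$. The useful closure is in the weaker order-$m$ seminorms, which is what Step~2 actually exploits.
\item In your display the rescaled cutoff should be $\phi(a_0(g)q_0/\epsilon)$ rather than $\phi(q_0/(a_0(g)\epsilon))$. This is harmless, because $\epsilon\to0$ either way.
\end{itemize}
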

\begin{proof}
By \cite[Corollary~5.4.4]{AGSch}, choose a tempered extension
$\widetilde T$ of $T$ to $X$. In the odd case, replace it by
$(\widetilde T-s_*\widetilde T)/2$.
The tempered distribution $\widetilde T$ has finite order $m$.
Since linear substitutions preserve order, each distribution
\[
 E_g(f)=\widetilde T(\rho(g)f)-\chi(g)\widetilde T(f),\qquad g\in L,
\]
has order at most $m$ and is supported on $\{q_0=0\}$.
Taking $k=m+1$, we have $q_0^kE_g=0$ for every $g\in L$,
since $q_0^kf$ vanishes with all derivatives of order at most $m$
on the support of $E_g$.
Since
\[
 q_0^k\rho(g)f=a_0(g)^k\rho(g)(q_0^kf),
\]
we obtain
\[
 (q_0^k\widetilde T)(\rho(g)f)
 =\chi(g)a_0(g)^k(q_0^k\widetilde T)(f).
\]
Take $T'=q_0^k\widetilde T$. Its restriction is $q_0^kT$, which
is nonzero because $q_0$ has no zeros on $U$.
Multiplication by $q_0^k$ preserves oddness.
\end{proof}

On the three matrix spaces define
\begin{equation}\label{eq:relative-polynomial}
 q_{\DD}(M)=
 \begin{cases}
 (\det M)^2,&\DD=\R,\\
 \absC{\det M},&\DD=\C,\\
 \N(M),&\DD=\HH.
 \end{cases}
\end{equation}
These are nonnegative real polynomials, $G=\{q_{\DD}>0\}$, and
$q_{\DD}\circ s=q_{\DD}$. Under the $L$-action,
\[
 q_{\DD}((h_l,h_r)\cdot M)
 =a_{\DD}(h_l,h_r)q_{\DD}(M),\qquad
 a_{\DD}(h_l,h_r)=q_{\DD}(h_l)q_{\DD}(h_r)^{-1}.
\]
Thus $a_\DD=\chi_{q_\DD|_H}$. For $\DD=\R,\C,\HH$ respectively,
\[
 q_\DD(\diag(A,D))=|\det A|^2|\det D|^2,\qquad
 \absC{\det A}\,\absC{\det D},\qquad
 \N(A)\N(D).
\]
Hence, for every $k\geq0$, the character $\eta_0\,q_\DD^k|_H$ is again
normalized whenever $\eta_0$ is.

\begin{proof}[Proof of Theorem~\ref{thm:distributions}]
Suppose first that $\eta_0$ is normalized. If a
$\chi_{\eta_0}$-equivariant distribution on $G$ is not $s$-invariant,
its odd part is nonzero. Lemma~\ref{lem:weighted-extension},
applied to $q_{\DD}$, produces a nonzero odd tempered distribution
on $\Mat_n(\DD)$ with character
$\chi_{\eta_0}a_{\DD}^k=\chi_{\eta_0q_\DD^k|_H}$, and
$\eta_0q_\DD^k|_H$ is normalized.
Proposition~\ref{prop:matrix-distributions} gives a contradiction,
proving the normalized case.

For arbitrary $\eta$, write $\eta=\eta_0\theta|_H$ as in
\eqref{eq:character-normalization}, and put $w=\theta^{-1}$ on $G$.
Both $w$ and $w^{-1}$ are Nash multipliers of $\cS(G)$.
For $T\in\cS'(G)^{L,\chi_\eta}$, since the relative character of $w$ is $\chi_{\theta|_H}^{-1}$,
$wT$ is $\chi_{\eta_0}$-equivariant.
The normalized case gives
$s_*(wT)=wT$. Since $w\circ s=w$, this is $w\,s_*T=wT$.
Multiplication by $w^{-1}$ yields $s_*T=T$.
\end{proof}

\subsection{Contragredients and periods}\label{subsec:periods}
Let $dM$ be Lebesgue density on $\Mat_n(\DD)$. A bi-invariant Haar
density on $G$ is
\begin{equation}\label{eq:Haar}
 dx=
 \begin{cases}
 |\det x|^{-n}\,dM,&\DD=\R,\\
 \absC{\det x}^{-n}\,dM,&\DD=\C,\\
 \N(x)^{-2n}\,dM,&\DD=\HH.
 \end{cases}
\end{equation}
The density \eqref{eq:Haar} is also preserved by $s$.

By multiplying the densities in (\ref{eq:Haar}), the equivariance of an ordinary distribution in \eqref{eq:covariance} is equivalent
to the generalized-function identities 
\begin{equation}\label{eq:generalized-equivariance}
 f(h_lxh_r^{-1})=\eta(h_l)\eta(h_r)^{-1}f(x),
\end{equation}
and, since $dx$ is $s$-invariant, $s$-invariance of a distribution is
equivalent to $f(s(x))=f(x)$.
We apply the generalized Gelfand--Kazhdan criterion
\cite[Theorem 2.3(a)]{SZGK} with $S_1=S_2=H$, $\chi_1=\eta$,
$\chi_2=\eta^{-1}$, and the anti-automorphism $x\mapsto s(x)$ of $G$.
Therefore
\begin{equation}\label{eq:GK}
 \dim\Hom_H(\pi,\eta)\,
 \dim\Hom_H(\pi^\vee,\eta^{-1})\leq1
\end{equation}
for every irreducible Casselman--Wallach representation $\pi$.

The two factors in \eqref{eq:GK} are compared through the involution
$\alpha$ of \eqref{eq:s}.

\begin{lemma}\label{lem:contragredient}
For every irreducible Casselman--Wallach representation $\pi$ of
$\GL_n(\DD)$,
\begin{equation}\label{eq:contragredient}
 \pi^\vee\simeq\pi\circ\alpha.
\end{equation}
\end{lemma}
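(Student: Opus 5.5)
We prove $\pi^\vee\simeq\pi\circ\alpha$ at the level of distribution characters. Both sides are irreducible Casselman--Wallach representations. By the Casselman--Wallach equivalence, together with Harish-Chandra's theorem that an irreducible representation is determined by its global character, it suffices to show that $\Theta_{\pi^\vee}=\Theta_{\pi\circ\alpha}$ as generalized functions on $G$.

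First, $\Theta_{\pi^\vee}(g)=\Theta_\pi(g^{-1})$, the standard relation for contragredients. Second, $\Theta_{\pi\circ\alpha}(g)=\Theta_\pi(\alpha(g))=\Theta_\pi(s(g)^{-1})$. So we need $\Theta_\pi(s(g)^{-1})=\Theta_\pi(g^{-1})$, that is, $\Theta_\pi\circ s=\Theta_\pi$ on $G$.

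Since $\Theta_\pi$ is a conjugation-invariant locally integrable function, analytic on the regular set, it suffices to check that $s(g)$ is $G$-conjugate to $g$ for $g$ in a dense open subset of $G$. We take the regular semisimple elements.
\begin{itemize}
\item For $\DD=\R,\C$, $s(g)=g^t$ is conjugate to $g$ for every matrix, by the classical fact that a matrix over a field is similar to its transpose.
\item For $\DD=\HH$, $s(g)=\overline g^{\,t}$. We use that conjugacy classes of regular semisimple elements of $\GL_n(\HH)$ are detected by the characteristic polynomial of $\iota(g)\in\GL_{2n}(\C)$, i.e.\ by the reduced characteristic polynomial; for semisimple elements, quaternionic similarity is determined by the multiset of complex eigenvalues up to conjugate pairs. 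Using the identity $\iota(A^*)=\mathsf J_n^{-1}\iota(A)^t\mathsf J_n$ from Section~\ref{sec:matrix-model}, $\iota(s(g))$ is conjugate to $\iota(g)^t$. Hence it has the same characteristic polynomial as $\iota(g)$, so $s(g)$ and $g$ are conjugate in $G$.
\end{itemize}

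Since $s$ preserves Haar measure by \eqref{eq:Haar} and maps the regular set to itself, equality on this dense set gives $\Theta_\pi\circ s=\Theta_\pi$ as distributions. This yields \eqref{eq:contragredient}.

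The main obstacle is the quaternionic case: we must justify that the reduced characteristic polynomial determines conjugacy of regular semisimple elements in $\GL_n(\HH)$. We would do this by diagonalizing such an element over $\HH$ into $\C^\times$-entries, which are unique up to permutation and complex conjugation. An alternative would avoid characters: an outer automorphism preserving conjugacy classes on a dense set acts trivially on Harish-Chandra modules via Langlands parameters. The character argument is cleaner.
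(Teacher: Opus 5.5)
Your proof is correct, but it is organized differently from the paper's. The paper runs no character argument itself. For $\DD=\R,\C$ it cites \cite[Theorem~2.4.2]{AGS}. For $\DD=\HH$ it observes that $C=\operatorname{Int}(jI_n)\circ\alpha$ acts by inversion on the fundamental Cartan subgroup $\{\diag(z_1,\ldots,z_n):z_i\in\C^\times\}$, hence is a fundamental Chevalley involution, and then invokes $\pi\circ C\simeq\pi^\vee$ from \cite[Corollary~1.4]{Adams}.

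You instead treat all three cases uniformly: Harish-Chandra's regularity theorem and linear independence of characters reduce everything to the fact that $s(g)$ is $G$-conjugate to $g$ on the regular semisimple set. For $\DD=\R,\C$ you are in effect reproving the cited result by the standard character argument. For $\DD=\HH$, note that $C(g)\sim\alpha(g)=s(g)^{-1}$, so $C(g)\sim g^{-1}$ is equivalent to $s(g)\sim g$. Your quaternionic check is therefore exactly the conjugacy property that Adams establishes for Chevalley involutions in general. The paper's route is shorter and places $\GL_n(\HH)$ inside that general framework; yours is self-contained modulo Harish-Chandra's character theory.

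The step you flag as the main obstacle is lighter than you suggest. Since $s$ is an anti-automorphism, $s(xdx^{-1})=s(x)^{-1}s(d)s(x)$. For $d=\diag(z_1,\ldots,z_n)$ with $z_i\in\C^\times$, one has $s(d)=\diag(\bar z_1,\ldots,\bar z_n)=(jI_n)\,d\,(jI_n)^{-1}$. You therefore only need that every semisimple element of $\GL_n(\HH)$ is conjugate into this diagonal torus. Neither uniqueness of the diagonal form nor the reduced characteristic polynomial is required. This is the same computation the paper uses to see that $C$ inverts the fundamental Cartan.
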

\begin{proof}
For $\DD=\R,\C$, this is \cite[Theorem 2.4.2]{AGS}.
Suppose $\DD=\HH$. Let
\[
 C=\operatorname{Int}(jI_n)\circ\alpha.
\]
On the standard fundamental Cartan
$\{\diag(z_1,\ldots,z_n):z_i\in\C^\times\}$, one has
$C(h)=h^{-1}$; hence $C$ is a fundamental Chevalley involution.
By \cite[Corollary 1.4]{Adams}, $\pi\circ C\simeq\pi^\vee$.
Since $C$ differs from $\alpha$ by an inner automorphism,
$\pi\circ C\simeq\pi\circ\alpha$, proving \eqref{eq:contragredient}.
\end{proof}

\begin{proof}[Proof of Theorem~\ref{thm:main}]
The involution $\alpha$ preserves $H$, and
the character formulas \eqref{eq:char-R}--\eqref{eq:char-H} give
$\eta\circ\alpha=\eta^{-1}$. 
By Lemma~\ref{lem:contragredient},
\[
 \Hom_H(\pi^\vee,\eta^{-1})
 \simeq\Hom_H(\pi\circ\alpha,\eta^{-1})
 \simeq\Hom_H(\pi,\eta).
\]
This proves \eqref{eq:main}.
\end{proof}


\begin{thebibliography}{99}

\bibitem{Adams}
J.~Adams,
\emph{The real Chevalley involution},
Compos. Math. \textbf{150} (2014), no.~12, 2127--2142.

\bibitem{ALMSY}
U.~K.~Anandavardhanan, H.~Lu, N.~Matringe, V.~S\'echerre, and C.~Yang,
\emph{The sign of linear periods},
\href{https://arxiv.org/abs/2402.12106}{arXiv:2402.12106} (2024).

\bibitem{AGSch}
A.~Aizenbud and D.~Gourevitch,
\emph{Schwartz functions on Nash manifolds},
Int. Math. Res. Not. IMRN (2008), Art.~ID rnm155, 37~pp.

\bibitem{AGS}
A.~Aizenbud, D.~Gourevitch, and E.~Sayag,
\emph{$(\GL_{n+1}(F),\GL_n(F))$ is a Gelfand pair for any local field $F$},
Compos. Math. \textbf{144} (2008), no.~6, 1504--1524.

\bibitem{AGS09}
A.~Aizenbud, D.~Gourevitch, and E.~Sayag,
\emph{Generalized Harish--Chandra descent, Gelfand pairs, and an
Archimedean analog of Jacquet--Rallis's theorem},
Duke Math. J. \textbf{149} (2009), no.~3, 509--567.

\bibitem{BK}
J.~Bernstein and B.~Kr\"otz,
\emph{Smooth Fr\'echet globalizations of Harish-Chandra modules},
Israel J. Math. \textbf{199} (2014), no.~1, 45--111.

\bibitem{CCSP}
Y.~Li, J.~Xiao, and J.~Yu,
\emph{Regularity and the Gelfand Property for Complex Symmetric Pairs},
\href{https://arxiv.org/abs/2608.18928v2}{arXiv:2608.18928v2} (2026).

\bibitem{CS}
F.~Chen and B.~Sun,
\emph{Uniqueness of twisted linear periods and twisted Shalika periods},
Sci. China Math. \textbf{63} (2020), no.~1, 1--22.


\bibitem{Geng}
Z.~Geng,
\emph{On the existence of twisted Shalika periods: the Archimedean case},
\href{https://arxiv.org/abs/2501.11917}{arXiv:2501.11917} (2025).

\bibitem{JR}
H.~Jacquet and S.~Rallis,
\emph{Uniqueness of linear periods},
Compos. Math. \textbf{102} (1996), no.~1, 65--123.

\bibitem{Lu22}
H.~Lu,
\emph{The generalized linear periods},
J. Number Theory \textbf{233} (2022), 261--284.

\bibitem{Lu26}
H.~Lu,
\emph{The generalized linear period and the Shalika period over a division algebra},
J. Number Theory \textbf{279} (2026), 238--255.

\bibitem{LBP}
L.~Le Bruyn and C.~Procesi,
\emph{Semisimple representations of quivers},
Trans. Amer. Math. Soc. \textbf{317} (1990), no.~2, 585--598.

\bibitem{Lopatin}
A.~A.~Lopatin,
\emph{Invariants of quivers under the action of classical groups},
J. Algebra \textbf{321} (2009), no.~4, 1079--1106.

\bibitem{SVdB}
A.~Schofield and M.~Van den Bergh,
\emph{Semi-invariants of quivers for arbitrary dimension vectors},
Indag. Math. (N.S.) \textbf{12} (2001), no.~1, 125--138.

\bibitem{SZGK}
B.~Sun and C.-B.~Zhu,
\emph{A general form of Gelfand--Kazhdan criterion},
Manuscripta Math. \textbf{136} (2011), nos.~1--2, 185--197.


\bibitem{Zubkov}
A.~N.~Zubkov,
\emph{Invariants of mixed representations of quivers I},
J. Algebra Appl. \textbf{4} (2005), no.~3, 245--285.

\end{thebibliography}
\end{document}